\documentclass[12pt, reqno]{amsart}
\usepackage{amsmath, amsthm, amscd, amsfonts, amssymb, graphicx, xcolor, enumerate}
\usepackage{multicol, lipsum, paracol, diagbox, tabularx, multirow, makecell}
\usepackage[mathcal]{euscript}

\usepackage{dsfont, booktabs}

\usepackage[bookmarksnumbered, colorlinks, plainpages]{hyperref}

\theoremstyle{definition}
\newtheorem{theorem}{Theorem}[section]
\newtheorem{lemma}[theorem]{Lemma}
\newtheorem{proposition}[theorem]{Proposition}

\theoremstyle{remark}
\newtheorem{remark}[theorem]{Remark}
\numberwithin{equation}{section}

\newcommand{\BC}{\mathbb{C}}

\newcommand{\BF}{\mathbb{F}}

\newcommand{\BK}{\mathbb{K}}

\newcommand{\BZ}{\mathbb{Z}}

\newcommand{\CF}{\mathcal{F}}

\newcommand{\CR}{\mathcal{R}}
\newcommand{\CS}{\mathcal{S}}

\newcommand{\gs}{\geqslant}
\newcommand{\ls}{\leqslant}
\newcommand{\pt}[1]{\left(#1\right)}
\newcommand{\cb}[1]{\left\{#1\right\}}
\newcommand{\jk}[1]{\left\langle#1\right\rangle}
\newcommand{\fk}[1]{\left[#1\right]}
\newcommand{\abs}[1]{\left|#1\right|}

\newcommand{\fS}{\mathfrak{S}}

\newcommand{\GL}{\mathrm{GL}}
\newcommand{\SL}{\mathrm{SL}}
\newcommand{\Cay}{\mathrm{Cay}}
\newcommand{\ord}{\mathrm{o}}
\newcommand{\jz}[4]{{\begin{pmatrix}
		#1&#2\\
		#3&#4
\end{pmatrix}}}
\newcommand{\one}{\mathds{1}}

\newcommand{\spec}{\mathrm{spec}}
\newcommand{\supp}{\mathrm{supp}}

\begin{document}
	\setcounter{page}{1}

	\centerline{}
	
	\centerline{}

	
\title[]{\large EKR R\lowercase{esults} \lowercase{for} S\lowercase{ome}  C\lowercase{uspidal} R\lowercase{epresentations}}
	
\author[]{\large J\lowercase{iaqi} L\lowercase{iao}}
	
\address{School of Mathematics and Physics, University of Science and Technology Beijing, Beijing 100083, China}

\email{\textcolor[rgb]{0.00,0.00,0.84}{ liaojq19@ustb.edu.cn}}
	
	
\thanks{}

\begin{abstract}

Let $\mathcal{S}_\chi$ be a cuspidal representation of $\mathrm{GL}_2(q)$. A subset $S \subseteq \mathrm{GL}_2(q)$ is $t$-intersecting if $\dim_{\mathbb{C}}\{v \in \mathcal{S}_\chi: g_1v = g_2v\} \geqslant t$ for any $g_1, g_2 \in S$. For $q = p^f$, where $p$ is an odd prime, $f \geqslant 2$, and $1 \leqslant t < p$ is an integer, we determine the maximum size of an $t$-intersecting subset when $\chi(-1) = -1$, and when $\chi(-1) = 1$ with $t \leqslant 2$. The proof combines explicit fixed-space dimensions, attaining subgroup constructions, and weighted spectral bounds for normal Cayley graphs.

\end{abstract} \maketitle
	
\section{Introduction}

Intersection problems ask how large a family of mathematical objects can be when every pair satisfies a prescribed intersection condition, and which families attain the maximum. A fundamental example is the Erd\H{o}s-Ko-Rado theorem \cite{MR140419}: if $n \gs 2k$ and $\CF \subseteq \binom{[n]}{k}$ is intersecting, then $\abs{\CF} \ls \binom{n - 1}{k - 1}$. The bound is attained by a star, consisting of all $k$-subsets containing a fixed element; when $n > 2k$, these are the only extremal families. This theorem initiated a broad study of intersection conditions for sets and other combinatorial structures, including vector spaces, graphs, and permutations. The surveys of Frankl and Tokushige \cite{MR3534067} and Ellis \cite{MR4421401} describe many of these developments, while Godsil and Meagher \cite{MR3497070} provide a systematic account of the algebraic methods used in the subject.

For subsets of group elements, a natural notion of intersection is agreement in a given action. Let $G \curvearrowright X$ be an action of a finite group $G$ on a finite set $X$. A subset $S \subseteq G$ is called $t$-intersecting with respect to this action if $\abs{\cb{x \in X: g_1x = g_2x}} \gs t$ for any $g_1, g_2 \in S$. For the natural action of the symmetric group $\fS_n$, Ellis, Friedgut and Pilpel \cite{MR2784326} proved that, for each fixed $t$ and all sufficiently large $n$, $t$-intersecting family $S$ satisfies $\abs{S} \ls (n - t)!$, with equality precisely when the family is the coset of the point-wise stabilizer of $t$ distinct points. Their work resolved the longstanding Deza-Frankl conjecture \cite{MR439648}.

Agreement also gives rise to intersection problems for linear maps. For the natural action of $\GL_n(q) \curvearrowright \BF_q^n$, the condition is $\dim_{\BF_q}\cb{v \in \BF_q^n: g_1v = g_2v} \gs t$. Results for the case $n = 2$ were obtained by Meagher and Razafimahatratra \cite{MR4521814}. More specifically, \cite[Theorem 1.2 and Section 2]{MR4521814} treats natural actions on non-zero finite-field vectors and already uses weighted derangement graphs and Hoffman's bound. Our intersection relation instead comes from a complex cuspidal representation. 

The present paper studies agreement in  complex irreducible representations of finite groups. Let $G \curvearrowright V$ be such a representation. We say that $S \subseteq G$ is $t$-intersecting with respect to this representation if $\dim_{\BC}\cb{v \in V: g_1v = g_2v} \gs t$ for any $g_1, g_2 \in S$. Writing $V^g := \cb{v \in V: gv = v}$, we have $\cb{v \in V: g_1v = g_2v} = V^{g_2^{-1}g_1}$. Thus $t$-intersection means $\dim_{\BC}V^{g_2^{-1}g_1} \gs t$ for any $g_1, g_2 \in S$. This leads to the following extremal question: for a given complex irreducible representation, what is the maximum possible size $\abs{S}$? We answer this question for the following parameter regimes of the cuspidal representations of $\GL_2(q)$.

\begin{theorem}\label{main}
	
Let $p$ be an odd prime, $f \gs 2$ and $q = p^f$, and $t \in \BZ$ with $1 \ls t < p$. Let $\BF$ be a field with $q$ elements, $\BK/\BF$ be quadratic and $\BK^1 := \cb{z \in \BK^*: z^{q + 1} = 1}$. Let $\chi$ be a complex multiplicative character of $\BK^*$ such that $\abs{\chi(\BK^1)} \ne 1$. Set $\ell := \abs{\chi(\BF^*)}$ and $k := \min\cb{k' \gs t: k'\mid q - 1}$. Let $\CS_\chi$ be the complex irreducible cuspidal representation of $\GL_2(q)$ associated with $\chi$. With respect to $\GL_2(q) \curvearrowright \CS_\chi$, if $S \subseteq \GL_2(q)$ satisfies that $\dim_\BC\cb{v \in \CS_\chi: g_1v = g_2v} \gs t$ for any $g_1, g_2 \in S$, then
\[\max\abs{S} = \left\{\begin{aligned}
	&\frac{\abs{\GL_2(q)}}{\ell},&\quad&\text{if } 2 \nmid \ell \text{ and } t = 1;\\
	&\frac{\abs{\GL_2(q)}}{2\ell},&\quad&\text{if } \chi(-1) = 1, \text{ and either } t = 2 \text{ or } (t = 1 \text{ and } 2\mid\ell);\\
	&\frac{\abs{\GL_2(q)}}{k\ell(q + 1)},&\quad&\text{if } \chi(-1) = -1.
\end{aligned}\right.\]

\end{theorem}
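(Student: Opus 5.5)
The plan is to recast the problem as an independent-set problem in a normal Cayley graph. We bound it above by a weighted Hoffman (ratio) bound and below by explicit subgroups.

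\textbf{Fixed-space dimensions.} First we compute $\dim_\BC \CS_\chi^g$ for every conjugacy class of $\GL_2(q)$, using
\[
\dim \CS_\chi^g=\frac{1}{\ord(g)}\sum_{j=0}^{\ord(g)-1}\operatorname{tr}(g^j)
\]
and the cuspidal character table. The character values are:
\begin{itemize}
\item $(q-1)\chi(a)$ on a scalar $a$;
\item $-\chi(a)$ on $a$ times a nontrivial unipotent;
\item $0$ on split non-scalar semisimple elements;
\item $-(\chi(z)+\chi(z^q))$ on elliptic $z\in\BK\setminus\BF$.
\end{itemize}
A cleaner route for the elements of the mirabolic group $P$ is the Kirillov model. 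There $\CS_\chi|_P\cong\mathrm{Ind}_U^P\psi$ is realised on functions on $\BF^*$, so $\begin{pmatrix}a&b\\0&1\end{pmatrix}$ with $a\neq 1$ acts as a permutation with $(q-1)/\ord(a)$ cycles. For a nontrivial unipotent $u$ this gives $\dim\CS_\chi^u=(q-p)/p\ge t$.

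The output of this step is the conjugation-invariant set $X_t=\{g:\dim \CS_\chi^g<t\}$. By the identity $\{v:g_1v=g_2v\}=\CS_\chi^{g_2^{-1}g_1}$, $S$ is $t$-intersecting iff it is independent in $\Gamma_t=\Cay(\GL_2(q),X_t)$. We expect $X_t$ to consist of the following elements:
\begin{itemize}
\item scalars $a$ with $\chi(a)\ne1$, together with their products with unipotents;
\item elliptic and split classes whose order is too large relative to $q-1$ (or $q+1$);
\item when $\chi(-1)=-1$, classes involving $-1$ (for instance $-u$), which is what forces the extra factor $q+1$.
\end{itemize}

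\textbf{Constructions.} For $\chi(-1)=-1$, let $A\le\BF^*$ be the subgroup of order $(q-1)/k$, and let $Z_0=\ker\chi|_{\BF^*}$, of order $(q-1)/\ell$. Take
\[
H=Z_0\cdot\left\{\begin{pmatrix}a&b\\0&1\end{pmatrix}:a\in A,\ b\in\BF\right\}.
\]
We have $|H|=q(q-1)^2/(k\ell)=|\GL_2(q)|/(k\ell(q+1))$, since $Z_0\cap P=1$. Every non-unipotent element of the $P$-part has at least $k\ge t$ cycles, unipotents are handled by $t<p$, and $Z_0$ acts trivially. Hence $H$ is $t$-intersecting.

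For $\chi(-1)=1$, we have $-1\in Z_0$, and $H=Z_0\,\SL_2(q)$ has order $|\GL_2(q)|/(2\ell)$. We check with the table that all its classes have fixed dimension at least $2$: elements of $\SL_2$ have $\chi$-values making $\chi(z)+\chi(z^q)$ small enough. When $\ell$ is odd and $t=1$, we enlarge to the index-$\ell$ subgroup
\[
\{g:\chi(\det g)^{?}\ \text{lies in the appropriate coset}\},
\]
namely the preimage under $\det$ of the subgroup of $\BF^*$ on which the central character $\chi|_{\BF^*}$ can be "absorbed" by a square root. Because $\ell$ is odd, $\chi|_{\BF^*}$ is a square, so $\CS_\chi\otimes(\eta\circ\det)^{-1}$ has a $\ker$ of index $\ell$ modulo $\SL_2$. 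We verify $t=1$ membership classwise.

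\textbf{Upper bound (main obstacle).} We apply the weighted ratio bound to $\Gamma_t$. Choose a class function $w\ge0$ supported on $X_t$. The eigenvalues are
\[
\lambda_\psi=\frac{1}{\psi(1)}\sum_{g\in X_t}w(g)\psi(g),\qquad \psi\in\mathrm{Irr}(\GL_2(q)).
\]
Then $|S|\le |G|\,\frac{-\lambda_{\min}}{\lambda_{\mathbf 1}-\lambda_{\min}}$, and we need this to equal $1/\ell$, $1/(2\ell)$ or $1/(k\ell(q+1))$ in the respective cases. The difficulty is choosing the weights on only a few classes so that $\lambda_{\min}$ is attained uniformly across all four families of irreducibles (linear, Steinberg twists, principal series, cuspidal). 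We would first try weights supported on three kinds of classes: the non-kernel scalar classes $aZ_0$, the classes $a u$, and one elliptic family. The aim is that the resulting sums telescope by orthogonality in $\BF^*$ and $\BK^*$. The ratio $1/\ell$ should come from the scalar part, $1/2$ from the central sign, and $1/(k(q+1))$ from elliptic and large-order split elements. Then we solve the small linear system for the weights and verify the sign of $\lambda_\psi$ family by family. Should one weighting fail to give the right bound, a fallback is to combine the ratio bound for the quotient by $Z_0$ with a clique–coclique bound, using an explicit clique (e.g.\ a cyclic torus times $\ker$-cosets) whose size multiplies with $|H|$ to $|G|$.
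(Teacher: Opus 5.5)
Your reduction to cocliques in $\Cay(\GL_2(q),X_t)$ is fine, but the upper bound is not proved, and that is the substance of the theorem. You give no weight function, no clique and no eigenvalue computation, only a plan to ``solve a small linear system''. Worse, the ansatz you propose to try first provably fails when $\chi(-1)=-1$ and $t\ge 2$ (so $k\ge 2$). Consider the subgroup $Q=\cb{\jz{a}{b}{}{d}:\chi(d)=1}$, of order $\abs{\GL_2(q)}/\ell(q+1)$. Its elements with $a=d$ are conjugate to $c_1(d)$ or $c_2(d)$ with $\chi(d)=1$, so they lie outside $X_t$ by Lemma~\ref{eta} (using $q/p-1\ge t$). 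Its elements with $a\ne d$ are split semisimple. Hence for any $w$ supported on scalar, scalar-times-unipotent and elliptic classes, $Q$ is independent in the support graph of $w$. The Hoffman bound obtained from such a $w$ is therefore at least $\abs{Q}$, a factor $k$ above the target $\abs{\GL_2(q)}/k\ell(q+1)$.

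The missing idea is a weight on the split classes that produces exactly this factor $k$. The paper puts $\frac{1}{kmq}\abs{U_{x/y}}$ on $c_3(x,y)$, where $\abs{U_r}$ is the number of pairs $0\le i\ne j<k$ with $\varepsilon^{i-j}=r$. By minimality of $k$ this weight vanishes off $X_t$. It turns the principal-series eigenvalues into $\ell\sigma_\alpha\one\fk{\alpha\beta}-\one\fk{(\alpha\beta)^\ell}$, where $\sigma_\alpha=\frac1k\abs{\sum_{i<k}\alpha(\varepsilon)^i}^2\ge 0$. The elliptic weight $u$ takes two different values, on $\log_\omega z\equiv\ell\pmod{2\ell}$ and on $\log_\omega z\not\equiv 0\pmod{\ell}$. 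This makes the cuspidal eigenvalues equal to $\pt{1+\frac{k(q+1)}{q-1}}\one\fk{\lambda^{2\ell}}\ge 0$. The scalar and $c_2$ weights are then tuned so that $E_w(S_1)=-1$ with row sum $k\ell(q+1)-1$. The parities $2\mid\ell$ and $2\nmid m$ forced by $\chi(-1)=-1$ enter both the adjacency check and these character sums.

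The other regimes also need explicit certificates that you do not supply:
\begin{itemize}
\item when $2\nmid\ell$, $t=1$: weights on the non-kernel scalars;
\item when $2\mid\ell$, $t=1$: a weight on $T\setminus H$ inside one non-split torus;
\item when $\chi(-1)=1$, $t=2$: the clique $\cb{c_1(\varepsilon^i),c_3(\varepsilon^{i+1},\varepsilon^i):0\le i<\ell}$ of size $2\ell$.
\end{itemize}

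Your lower bounds do coincide with the paper's. $H=B_t$, and your Kirillov-model cycle count is a clean way to check it. $Z_0\SL_2(q)=K_2$. The only index-$\ell$ subgroup containing $\SL_2(q)$ is $K_1=\det^{-1}(\ker\chi\cap\BF^*)$, which is what your ``$\chi(\det g)^{?}$'' should say. However, the deferred classwise checks are not about $\chi(z)+\chi(z^q)$ being small. For elliptic $z\in\ker\chi$ one has $\eta(c_4(z))=(q+1)/e-2$ with $e=\ord(z\BF^*)$. One must show $e\le(q+1)/3$ for $K_1$, and $e\le(q+1)/4$ for $z\in\ker\chi\cap\BK^1$ in $K_2$. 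This is precisely where $\abs{\chi(\BK^1)}\ne1$ enters.
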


\begin{remark}
	
The condition $\chi(-1) = -1$ forces $\ell$ to be even and $m = (q - 1)/\ell = \abs{\ker\chi\cap\BF^*}$ to be odd. These parity conditions are used in the non-split support test and the character sums underlying Section \ref{case4}. When $\chi(-1) = 1$ and $t \ls 2$, the separate constructions in Sections \ref{case1}--\ref{case3} apply. The cases $\chi(-1) = 1$, $t \gs 3$ has neither of these available certificates and remains unsolved here. The assumptions $f \gs 2$ and $t < p$ also give $q/p - 1 \gs t$ which is needed for the non-semisimple classes in Lemmas \ref{eta} and \ref{Borel}. The prime-field case and larger thresholds require separate analysis.	
	
\end{remark}

The paper is organized as follows. Section \ref{Background} provides the necessary background in representation theory and spectral graph theory. Section \ref{Preliminaries} establishes several auxiliary lemmas involving dimension calculations, the construction of candidate maximum $t$-intersecting families, and the evaluation of useful character sums. Finally, in Section \ref{Proof}, we construct suitable weight functions and compute the eigenvalues of the associated weighted adjacency matrices to establish upper bounds on the sizes of $t$-intersecting families.

\section{Background}\label{Background}

\subsection{Representation theory}

Let $G$ be a finite group. A \emph{complex representation} of $G$ is a pair $(\rho, V)$, where $V$ is a finite-dimensional complex vector space and $\rho: G \rightarrow \GL(V)$ is a homomorphism. An \emph{equivalence} between two representations $(\rho, V)$ and $(\rho', V')$ is a linear isomorphism $f: V \rightarrow V'$ such that $f(\rho(g)(v)) = \rho'(g)(f(v))$ for all $g \in G$ and $v \in V$. The representation $(\rho, V)$ is said to be \emph{irreducible} if there is no non-zero proper subspace of $V$ which is $\rho(g)$-invariant for all $g \in G$. There are only finitely many equivalence classes of irreducible complex representations of $G$. The \emph{character} $\theta_\rho$ of $\rho$ is the map defined by $\theta_\rho(g) = \mathrm{Tr}(\rho(g))$. We denote by $\CR(G)$ the set of all complex irreducible characters of $G$. $\GL_2(q)$ has four types of irreducible complex representations, namely the \textbf{L}inear representations, the \textbf{S}teinberg representations, the \textbf{P}rincipal series representations, and the non-split \textbf{W}eil representations (also known as cuspidal representations). For more background, the readers may consult \cite[Section 4.1]{MR1431508}, which develops the finite-field model and its character formulas used in Theorem \ref{cusp} and \ref{table} below.

Fix an odd prime power $q$, a field $\BF$ with $q$ elements, $\BK/\BF$ be quadratic, and $\BK^1 := \cb{z \in \BK^*: z^{q + 1} = 1}$. Assume $\BF^* = \jk{\varepsilon}$ and $\BK^* = \jk{\omega}$ with $\omega^{q + 1} = \varepsilon$. Fix a nontrivial complex additive character $\psi$ of $\BF$ and a complex  multiplicative character $\chi$  of $\BK^*$ with $\abs{\chi(\BK^1)} \ne 1$. Let $\CS := \cb{\text{complex functions on } \BK}$. For $f \in \CS$, the \emph{Fourier transform} $\widehat{f}(x) := -q^{-1}\cdot\sum_{y \in \BK}f(y)\psi(x^qy + xy^q).$ Let $\CS_\chi := \cb{f \in \CS: f(yx) = f(x)/\chi(y)\; \forall y \in \BK^1}.$ The cuspidal representation is as follows.

\begin{theorem}\label{cusp}
$\GL_2(q) \curvearrowright \CS_\chi$ is irreducible, the action is described as follows:
\[\begin{aligned}
	&{\jz{a}{}{}{a^{-1}}f}\;\;(x) := f(ax);&\quad&{\jz{1}{b}{}{1}f}\;\;(x) := \psi(bx^{q + 1})f(x);\\
	&{\jz{}{1}{-1}{}f}\;\;(x) := \widehat{f}(x);&\quad&{\jz{\varepsilon}{}{}{1}f}\;\;(x) := \chi(\omega)f(\omega x).
\end{aligned}\]
	
\end{theorem}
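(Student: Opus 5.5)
The plan is to build the representation in two stages: first an action of $\SL_2(q)$ on all of $\CS$ (the Weil representation attached to the norm form of $\BK/\BF$), then an extension to $\GL_2(q)$ on the subspace $\CS_\chi$ using the element $\jz{\varepsilon}{}{}{1}$. Irreducibility will come from restricting to the mirabolic subgroup. Throughout write $u(b)=\jz{1}{b}{}{1}$, $h(a)=\jz{a}{}{}{a^{-1}}$, $w=\jz{}{1}{-1}{}$ and $d=\jz{\varepsilon}{}{}{1}$. The formula for $h(a)$ is read with $a\in\BF^*$.

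\emph{Step 1: the action of $\SL_2(q)$ on $\CS$.} The group $\SL_2(q)$ is generated by $u(b)$, $h(a)$ and $w$. I would use the standard presentation by the Bruhat relations:
\[
h(a)h(a')=h(aa'),\qquad u(b)u(b')=u(b+b'),\qquad h(a)u(b)h(a)^{-1}=u(a^2b),
\]
\[
w^2=h(-1),\qquad wh(a)w^{-1}=h(a^{-1}),\qquad wu(b)w=u(-b^{-1})\,w\,h(-b^{-1})\,u(-b^{-1})\quad (b\neq 0).
\]
The first three relations are immediate from the formulas, since $(ax)^{q+1}=a^2x^{q+1}$ for $a\in\BF$.

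For the relations involving $w$, I would use Fourier analysis on $\BK$ with respect to the pairing $(x,y)\mapsto\psi(\mathrm{Tr}(x^qy))$, which is non-degenerate. Fourier inversion gives $\widehat{\widehat f}(x)=f(-x)$. This uses the normalization $-q^{-1}$ together with $(-q^{-1})^2q^2=1$. Hence $w^2$ acts as $h(-1)$.

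The relation $wh(a)w^{-1}=h(a^{-1})$ follows by the substitution $y\mapsto a^{-1}y$.

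The last relation is the crux. Unwinding both sides, it reduces to evaluating the quadratic Gauss sum of the norm form:
\[
\sum_{y\in\BK}\psi\bigl(b\,y^{q+1}\bigr)=-q\qquad (b\in\BF^*).
\]
I would prove this by counting. The norm $\BK^*\to\BF^*$ is surjective with fibres of size $q+1$, so the sum equals $1+(q+1)\sum_{n\in\BF^*}\psi(bn)=1-(q+1)=-q$. The sign here is exactly why the Fourier transform carries the factor $-q^{-1}$. I expect this verification, keeping careful track of the signs and of the completion of the square in the exponent, to be the main computational obstacle.

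\emph{Step 2: restriction to $\CS_\chi$ and extension to $\GL_2(q)$.} Multiplication by $z\in\BK^1$ commutes with all the $\SL_2$ operators. For $u(b)$ and $h(a)$ this holds because the norm is $\BK^1$-invariant. For $w$ it follows by substituting $y\mapsto zy$, which preserves $x^qy+xy^q$ once $x$ is also replaced by $zx$. Consequently $\CS_\chi$ is $\SL_2(q)$-stable.

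Next define the action of $d$ by $(df)(x)=\chi(\omega)f(\omega x)$. This preserves $\CS_\chi$, and replacing $\omega$ by $\omega z$ with $z\in\BK^1$ does not change the operator on $\CS_\chi$. Since $\GL_2(q)=\SL_2(q)\rtimes\langle d\rangle$, it remains to check three things:
\begin{itemize}
\item $d^{q-1}$ equals the identity on $\CS_\chi$. This reduces to the value of $\chi$ on $\omega^{q-1}\in\BK^1$ combined with the twist $\chi(\omega)^{q-1}$.
\item $d$ conjugates $u(b)$, $h(a)$ and $w$ to the operators of the conjugated matrices. For $u(b)$ this uses $N(\omega x)=\varepsilon N(x)$. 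For $w$ one substitutes $y\mapsto\omega^{q}y$, and the factor $\chi(\omega)$ exactly absorbs the discrepancy coming from the $\BK^1$-part.
\item The scalar matrices act compatibly, which follows from the two previous checks.
\end{itemize}

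\emph{Step 3: irreducibility.} Since $\chi$ is nontrivial on $\BK^1$, every $f\in\CS_\chi$ vanishes at $0$. A function in $\CS_\chi$ is then determined by its values on a set of representatives of $\BK^*/\BK^1\cong\BF^*$, so $\dim\CS_\chi=q-1$.

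Decompose under the unipotent group $\{u(b)\}$. The functions supported on a single norm fibre $\{N(x)=n\}$, for $n\in\BF^*$, form lines on which $u(b)$ acts by the character $b\mapsto\psi(bn)$. As $n$ ranges over $\BF^*$ these are exactly the $q-1$ nontrivial additive characters, each occurring once. The diagonal elements $d^j$ and $h(a)$ permute these characters transitively, because $d$ sends the fibre $N=n$ to the fibre $N=\varepsilon^{-1}n$.

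Hence the restriction of $\CS_\chi$ to the mirabolic subgroup is the unique irreducible $(q-1)$-dimensional representation of that subgroup, induced from a nontrivial character of the unipotent radical. In particular $\CS_\chi$ is already irreducible for the mirabolic subgroup, and a fortiori for $\GL_2(q)$.
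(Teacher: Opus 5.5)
The paper gives no proof of this theorem; it quotes it from \cite[Section 4.1]{MR1431508}. Your outline is the standard Weil-representation construction of this model: a presentation of $\SL_2(q)$ checked on $\CS$, extension through $\GL_2(q)=\SL_2(q)\rtimes\langle d\rangle$, and irreducibility from the mirabolic subgroup. The following parts are correct as written:
\begin{itemize}
\item the mirabolic step, which needs only $u(b)$, $d$ and $\dim\CS_\chi=q-1$, and is complete;
\item the checks $d^{q-1}=1$, $du(b)d^{-1}=u(\varepsilon b)$, $w^2=h(-1)$ and $wh(a)w^{-1}=h(a^{-1})$;
\item the Gauss sum $\sum_y\psi(by^{q+1})=-q$.
\end{itemize}

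There is one error you must fix: the braid relation is misstated. As matrices, $wu(b)w=\jz{-1}{0}{b}{-1}$, but $u(-b^{-1})\,w\,h(-b^{-1})\,u(-b^{-1})$ is a different matrix. The correct relation is
\[
wu(b)w=u(-b^{-1})\,w\,h(-b)\,u(-b^{-1})\qquad(b\neq0).
\]
With your version the check on $\CS$ would fail, because $h(-b)$ and $h(-b^{-1})$ act differently unless $b^2=1$. With the corrected relation, both sides send $f$ to
\[
x\mapsto -q^{-1}\sum_{z\in\BK}f(z)\,\psi\bigl(-b^{-1}(x+z)^{q+1}\bigr).
\]
On the right-hand side this is just the substitution $z=-by$. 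On the left-hand side, put $c=x+z$; the exponent becomes $by^{q+1}+\mathrm{Tr}(yc^q)$. Completing the square gives
\[
\sum_y\psi\bigl(by^{q+1}+\mathrm{Tr}(yc^q)\bigr)=\psi(-c^{q+1}/b)\sum_y\psi(by^{q+1}),
\]
and your Gauss-sum value $-q$ finishes it.

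A smaller point concerns $dwd^{-1}=h(\varepsilon)w$. The right substitution is $y\mapsto\omega y$, using $\omega^{q+1}=\varepsilon$, not $y\mapsto\omega^q y$. No $\BK^1$ discrepancy arises, and the scalar $\chi(\omega)$ simply cancels. Its only roles are to make $d$ preserve $\CS_\chi$ and to give $d^{q-1}=1$ there.

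Finally, if you rely on the presentation, either cite it or note why the six relations present $\SL_2(q)$. Put $\tilde B=\langle h(a),u(b)\rangle$ and $\tilde U=\langle u(b)\rangle$ in the abstract group. The relations force that group to equal $\tilde B\cup\tilde B w\tilde U$, with $|\tilde B|\le q(q-1)$. Its order is therefore at most $q(q-1)(q+1)=|\SL_2(q)|$, while it surjects onto $\SL_2(q)$.
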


Define $c_1(x) := \jz{x}{}{}{x}$, $c_2(x) := \jz{x}{1}{}{x}$, $c_3(x, y) := \jz{x}{}{}{y}$, $c_4(x + y\sqrt{\varepsilon}) := \jz{x}{y}{y\varepsilon}{x}$. A \emph{character table} of a finite group is a table whose rows are irreducible characters, and whose columns are conjugacy classes of this group. The following theorem gives the character table of $\GL_2(q)$.

\begin{theorem}\label{table}
Let $\alpha$ and $\beta$ run through all complex multiplicative characters of $\BF^*$ with $\alpha \ne \beta$, let $\lambda$ run through all complex multiplicative characters of $\BK^*$ with $\lambda \ne \lambda^q$. Let $x$ and $y$ run through all elements of $\BF^*$ with $x \ne y$, let $z$ run through all elements of $\BK^*$ with $z \ne z^q$. The character table of $\GL_2(q)$ is as follows: (The number in parentheses indicates the size of that class.
)
\[\begin{tabular}{|c|c|c|c|c|}
\toprule[0.1em]
\diagbox{Char}{Class}&$c_1(x)$($1$)&$c_2(x)$($q^2 - 1$)&$c_3(x, y)$($q^2 + q$)&$c_4(z)$($q^2 - q$)\\
\midrule[0.1em]
$L_\alpha$&$\alpha(x^2)$&$\alpha(x^2)$&$\alpha(xy)$&$\alpha(z^{q + 1})$\\
\hline
$S_\alpha$&$q\alpha(x^2)$&$0$&$\alpha(xy)$&$-\alpha(z^{q + 1})$\\
\hline
$P_{\alpha, \beta}$&$(q + 1)\alpha\beta(x)$&$\alpha\beta(x)$&$\alpha(x)\beta(y) + \alpha(y)\beta(x)$&$0$\\
\hline
$W_\lambda$&$(q - 1)\lambda(x)$&$-\lambda(x)$&$0$&$-\lambda(z) - \lambda(z^q)$\\
\bottomrule[0.1em]
\end{tabular}\]	
	
\end{theorem}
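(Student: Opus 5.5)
The plan is to compute each row separately, starting from the conjugacy class data and using induction from the Borel subgroup $B$ of upper triangular matrices for the non-cuspidal rows. For the cuspidal rows I would use the explicit model of Theorem~\ref{cusp}.

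\textbf{Class sizes.} I would first record centralizer orders.
\begin{itemize}
\item $c_1(x)$ is central.
\item $c_2(x)$ has centralizer $\cb{c_1(a)\jz{1}{b}{}{1}}$ of order $q(q-1)$, giving class size $q^2-1$.
\item $c_3(x,y)$ has the split torus as centralizer, of order $(q-1)^2$, giving $q(q+1)$.
\item $c_4(z)$ has centralizer $\BK^*$ embedded via $c_4$, of order $q^2-1$, giving $q(q-1)$.
\end{itemize}
The classes are $c_3(x,y)\sim c_3(y,x)$ and $c_4(z)\sim c_4(z^q)$, which yields the stated index ranges. A count of $(q-1)+(q-1)+\binom{q-1}{2}+\frac{q^2-q}{2}$ classes confirms completeness.

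\textbf{Linear rows.} The rows $L_\alpha=\alpha\circ\det$ are immediate, since $\det c_4(z)=x^2-\varepsilon y^2=z^{q+1}$.

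\textbf{Principal series and Steinberg rows.} For $P_{\alpha,\beta}=\mathrm{Ind}_B^G(\alpha\otimes\beta)$ I would apply the Frobenius formula
\[\theta(g)=\frac{1}{\abs{B}}\sum_{h\in G,\ hgh^{-1}\in B}(\alpha\otimes\beta)(hgh^{-1}).\]
\begin{itemize}
\item Elements $c_4(z)$ have no eigenvalue in $\BF$, so they are not conjugate into $B$ and the value is $0$.
\item For $c_3(x,y)$, the $B$-conjugates of $c_3(x,y)$ and $c_3(y,x)$ account for the two terms.
\item For $c_1$ the value is the index $q+1$ times $\alpha\beta(x)$.
\item For $c_2$ a single $B$-class contributes, giving $\alpha\beta(x)$.
\end{itemize}
Mackey's criterion shows irreducibility for $\alpha\ne\beta$. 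For $\alpha=\beta$ the induced character decomposes as $L_\alpha+S_\alpha$, and subtracting $L_\alpha$ gives the Steinberg row; for $c_4$ this is $0-\alpha(z^{q+1})$.

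\textbf{Cuspidal rows.} Here I would compute traces in the model of Theorem~\ref{cusp}. A basis of $\CS_\chi$ is given by functions supported on single $\BK^1$-orbits in $\BK^*$, since $f(0)=0$ is forced by $\chi|_{\BK^1}\ne 1$. This gives dimension $(q^2-1)/(q+1)=q-1$.
\begin{itemize}
\item The center acts by $\lambda(x)$, via $c_1(x)=\mathrm{diag}(x,1)\,\mathrm{diag}(1,x)$ and the formulas. This gives $(q-1)\lambda(x)$.
\item The trace of $\jz{1}{b}{}{1}$ is $\sum_{\text{orbits}}\psi(bN(x))=\sum_{n\in\BF^*}\psi(bn)=-1$, which gives the $c_2$ entry.
\item The split torus permutes the orbits. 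A diagonal element with $x\ne y$ acts as $f(x)\mapsto\chi(w)f(wx)$ with $N(w)=x/y\ne1$, so it moves every orbit and the trace is $0$.
\end{itemize}

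\textbf{Main obstacle.} The $c_4(z)$ entry is the hardest step, since it needs the Fourier operator $\widehat{\ }$ and Gauss-sum evaluations. The cleanest route I would try first avoids computing it directly. I would use the orthogonality relations: the norm $\jk{W_\lambda,W_\lambda}=1$ together with orthogonality to all $L,S,P$ rows, where the values on $c_1,c_2,c_3$ are already known. These leave only the $c_4$ values unknown, and they are pinned down as a class function on $\BK^*$ of the form $-\lambda(z)-\lambda(z^q)$.

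Alternatively, I would construct $W_\lambda$ as the virtual character
\[P_{\lambda|_{\BF^*},1}\otimes S_1-P_{\lambda|_{\BF^*},1}-\mathrm{Ind}_{\BK^*}^{G}\lambda\]
(suitably adjusted). I would then check that its norm is $1$ and its degree is positive, which gives the same row.

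Distinctness for $\lambda\ne\lambda^q$, with $W_\lambda=W_{\lambda^q}$, gives $\frac{q^2-q}{2}$ cuspidal characters. Together with the other rows this makes the count equal the number of classes, so the table is complete.
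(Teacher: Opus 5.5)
Your class sizes and your $L$, $S$, $P$ rows are fine. (To confirm the class list is complete, check that the class sizes sum to $\abs{\GL_2(q)}$ rather than just counting classes.) The gap is in the route you say you would try first for the $c_4$ entries: the relations you list cannot determine them.

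Write $\phi(z)$ for the unknown value at $c_4(z)$.
\begin{itemize}
\item Since $P_{\alpha,\beta}$ vanishes on the elliptic classes, orthogonality to the principal series says nothing about $\phi$.
\item Orthogonality to $L_\alpha$ and orthogonality to $S_\alpha$ both reduce to the same condition
\[\sum_{z\in\BK^*\setminus\BF^*}\phi(z)\overline{\alpha(z^{q+1})}=2(q-1)\cdot\one\fk{\alpha^2=\lambda|_{\BF^*}}.\]
\item The norm condition is $\sum_{z\in\BK^*\setminus\BF^*}\abs{\phi(z)}^2=2(q-1)^2$.
\end{itemize}
These conditions involve $\lambda$ only through $\lambda|_{\BF^*}$. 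One checks directly that $\phi=-\lambda'-\lambda'^q$ satisfies all of them for every $\lambda'$ with $\lambda'|_{\BF^*}=\lambda|_{\BF^*}$ and $\lambda'\ne\lambda'^q$. At most two of the $q+1$ extensions of $\lambda|_{\BF^*}$ to $\BK^*$ are Frobenius-fixed. So for $q\geq5$ there are at least $(q-1)/2\geq 2$ distinct pairs $\cb{\lambda',\lambda'^q}$, and hence several distinct solutions. Adding the column orthogonality relations does not help either: they determine the cuspidal block of the table only up to a unitary change of rows. This is the essential point. The traces on $c_1,c_2,c_3$ see only the central character, and $\lambda$ itself is visible only on the elliptic classes.

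So drop that route and use your alternative, which works with no adjustment. Put $\alpha:=\lambda|_{\BF^*}$; for $\alpha=1$, $P_{1,1}=L_1+S_1$ is still a genuine character.
\begin{itemize}
\item $P_{\alpha,1}\otimes S_1$ takes the values $q(q+1)\alpha(x)$, $0$, $\alpha(x)+\alpha(y)$, $0$ on the four class types.
\item $\mathrm{Ind}_{\BK^*}^{\GL_2(q)}\lambda$ takes the values $q(q-1)\lambda(x)$, $0$, $0$, $\lambda(z)+\lambda(z^q)$, since the normalizer of the non-split torus has order $2(q^2-1)$.
\end{itemize}
Hence $P_{\alpha,1}\otimes S_1-P_{\alpha,1}-\mathrm{Ind}_{\BK^*}^{\GL_2(q)}\lambda$ is exactly the $W_\lambda$ row. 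Its norm is $1$, because $\sum_{z\in\BK^*\setminus\BF^*}\abs{\lambda(z)+\lambda(z^q)}^2=2(q-1)^2$ when $\lambda^{q-1}\ne1$. Its degree is $q-1>0$, so it is irreducible, and your count then completes the table. This is the Fulton--Harris argument, and it makes your model computations unnecessary.

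The paper gives no proof of this statement; it simply quotes the table from Bump's book. One caution concerns what the paper uses later in Lemma \ref{eta}: that the model $\CS_\chi$ of Theorem \ref{cusp} affords $W_\chi$. Your $c_1,c_2,c_3$ traces show only that it affords some $W_{\chi'}$ with $\chi'|_{\BF^*}=\chi|_{\BF^*}$. Showing $\chi'\in\cb{\chi,\chi^q}$ still requires a genuine elliptic trace in the model, for example its restriction to the non-split torus.
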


\subsection{Spectral graph theory}

For any square matrix $A$, we denote by $\spec(A)$ the set of all eigenvalues of $A$.

A real function $w$ on a finite group $G$ is \emph{symmetric} if $w(g^{-1}) = w(g)$ for any $g \in G$. A real function $w$ on $G$ can be regarded as a square matrix whose rows and columns are indexed by elements of $G$ and the $(x, y)$-entry of this matrix is $w(xy^{-1})$.

\begin{theorem}[Babai \cite{MR546860}]\label{Babai}
	
Let $w$ be a real symmetric class function on a finite group $G$. Then $\spec(w) = \cb{E_w(\theta) := \frac{1}{\theta(1)}\cdot\sum_{g \in G}\theta(g)w(g): \theta \in \CR(G)}$.
\end{theorem}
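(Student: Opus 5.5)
The plan is to realize the matrix $w$ as an operator on the group algebra and use the Wedderburn decomposition together with Schur's lemma. Identify $\BC^G$ with the group algebra $\BC[G]$ via $f \mapsto \sum_{y} f(y)\, y$. For the matrix with $(x,y)$-entry $w(xy^{-1})$ we have
\[(wf)(x) = \sum_{y \in G} w(xy^{-1}) f(y) = (w * f)(x).\]
So the matrix is left multiplication by the element $z_w := \sum_{g \in G} w(g)\, g$ on the left regular module $\BC[G]$. Since $w$ is a class function, $z_w$ is central in $\BC[G]$.

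Next decompose the regular module as $\BC[G] \cong \bigoplus_{\theta \in \CR(G)} V_\theta^{\oplus \theta(1)}$, where $V_\theta$ affords $\theta$. Concretely, the $\theta$-isotypic component is spanned by the matrix coefficients of $\theta$ and has dimension $\theta(1)^2$; these dimensions sum to $|G|$. Because $z_w$ is central, its action on each irreducible $V_\theta$ commutes with $G$. By Schur's lemma it is therefore a scalar $c_\theta$. Taking traces on $V_\theta$ gives
\[c_\theta\,\theta(1) = \sum_{g} w(g)\,\theta(g),\]
so $c_\theta = E_w(\theta)$. Hence $z_w$ acts on the whole $\theta$-isotypic component as $E_w(\theta)$ times the identity.

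Since these components exhaust $\BC[G]$, the operator, and so the matrix, is diagonalizable with eigenvalue $E_w(\theta)$ of multiplicity $\theta(1)^2$, and it has no other eigenvalues. This gives $\spec(w) = \cb{E_w(\theta): \theta \in \CR(G)}$.

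One point needs checking, namely the left-versus-right convention. With the indexing $w(xy^{-1})$, one must confirm that it is left rather than right multiplication; the conclusion is the same either way, since a central element acts by the same scalar on both sides. There is also a possible conjugation: if one uses $w(x^{-1}y)$ or the dual representation, one obtains $\sum_g w(g)\overline{\theta(g)}$ instead. This is resolved by the hypotheses. Because $w$ is real and symmetric and $\theta(g^{-1}) = \overline{\theta(g)}$,
\[\sum_g w(g)\,\overline{\theta(g)} = \sum_g w(g^{-1})\,\theta(g^{-1}) = \sum_g w(g)\,\theta(g),\]
so every $E_w(\theta)$ is real and the two expressions agree. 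Consistently with this, the matrix is real symmetric, as $w(yx^{-1}) = w((xy^{-1})^{-1}) = w(xy^{-1})$.
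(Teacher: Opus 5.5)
The paper gives no proof of this statement. It quotes Babai and uses the result as a black box, so the only comparison is with the standard argument, and that is what you give. The matrix $(w(xy^{-1}))$ is left multiplication by the central element $\sum_g w(g)\,g$ on the regular module. Schur's lemma makes it a scalar on each irreducible constituent, and taking traces identifies that scalar as $E_w(\theta)$. Your proof is correct and complete.

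One aside is slightly off. With your identification $f \mapsto \sum_y f(y)\,y$, left translation acts on the matrix coefficients $x \mapsto \rho_\theta(x)_{ij}$ through the contragredient, since $\rho_\theta(g^{-1}x)_{ij} = \sum_k \rho_\theta(g^{-1})_{ik}\,\rho_\theta(x)_{kj}$. So these coefficients span the $\bar\theta$-isotypic component, not the $\theta$-isotypic one; they span the $\theta$-isotypic component only for right translation. Accordingly $\sum_g w(g)\,g$ acts on them by $E_w(\bar\theta)$. This does not damage the proof, which only uses the abstract isotypic decomposition, and it is exactly the conjugation your last paragraph disposes of.

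Your argument also never uses that $w$ is real or symmetric. The set identity $\spec(w) = \{E_w(\theta) : \theta \in \CR(G)\}$ holds for every complex class function. Those two hypotheses only make the matrix real symmetric with real eigenvalues, which is what the later application of Hoffman's bound needs.
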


Let $\Gamma = (V, E)$ be a finite graph. A real function $w$ on $\binom{V}{2}$ is \emph{adjacent} if $\supp(w) \subseteq E$; is \emph{$d$-regular} if $\sum_{i}w(\cb{i, j})\equiv d > 0$ for any $j \in V$. A function $w$ on $\binom{V}{2}$ can be regarded as a symmetric zero-diagonal square matrix whose rows and columns are indexed by $V$ and the $(i, j)$-entry of this matrix is $w(\cb{i, j})$. Weights may have either sign; adjacency means only that weights vanishes on non-edges.

\begin{theorem}[Hoffman \cite{MR284373}]\label{Hoffman}
	
Let $\Gamma = (V, E)$ be a finite graph and $w$ be a real adjacent $(d - 1)$-regular function on $\binom{V}{2}$. If $\spec(w) \subseteq [-1, +\infty)$, then the independence number of $\Gamma$ is no larger than $\abs{V}/d$.
	
\end{theorem}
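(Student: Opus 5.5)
The plan is the usual quadratic-form (ratio bound) argument, written for the weighted matrix $w$. Put $n := \abs{V}$ and regard $w$ as the real symmetric, zero-diagonal $n\times n$ matrix $A$ with $A_{ij} = w(\cb{i,j})$. Let $I \subseteq V$ be an independent set of $\Gamma$ with indicator vector $x \in \mathbb{R}^V$. The aim is to compute $x^{T}Ax$ in two ways.

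\textbf{Combinatorial side.} We have $x^{T}Ax = \sum_{i,j \in I} A_{ij}$. The diagonal terms vanish because $A$ has zero diagonal. For distinct $i,j \in I$ the pair $\cb{i,j}$ is not an edge, so $w(\cb{i,j}) = 0$ because $w$ is adjacent. Hence $x^{T}Ax = 0$. Adjacency is the only place where independence of $I$ enters, and it holds regardless of the signs of the weights.

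\textbf{Spectral side.} Regularity says $A\one = (d-1)\one$, so $\one$ is an eigenvector. Since $A$ is symmetric, $\one^{\perp}$ is $A$-invariant and is spanned by eigenvectors whose eigenvalues lie in $\spec(w) \subseteq [-1, +\infty)$. It follows that $y^{T}Ay \gs -\abs{y}^2$ for every $y \perp \one$. Decompose $x = \frac{\abs{I}}{n}\one + y$ with $y \perp \one$. Orthogonality and $A$-invariance kill the cross terms, giving
\[0 = x^{T}Ax = (d-1)\frac{\abs{I}^2}{n^2}\,n + y^{T}Ay \gs (d-1)\frac{\abs{I}^2}{n} - \abs{y}^2.\]
By Pythagoras, $\abs{y}^2 = \abs{x}^2 - \frac{\abs{I}^2}{n} = \abs{I} - \frac{\abs{I}^2}{n}$.

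\textbf{Conclusion.} Substituting gives $(d-1)\abs{I}^2/n \ls \abs{I} - \abs{I}^2/n$, that is, $d\abs{I}^2/n \ls \abs{I}$. If $I \neq \emptyset$ we divide by $\abs{I}$, and since $d > 0$ by regularity this yields $\abs{I} \ls n/d$; the case $I = \emptyset$ is trivial. There is no real obstacle here. The only points needing care are that $x^{T}Ax = 0$ uses both adjacency and the zero diagonal, and that positivity of $d$ is used in the last step.
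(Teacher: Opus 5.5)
Your proof is correct and complete. The identity $x^{T}Ax=0$ follows from adjacency together with the zero diagonal. The cross term vanishes because $A\one=(d-1)\one$ and $A$ is symmetric. The final division is justified because $(d-1)$-regularity, in the paper's sense, already forces $d-1>0$.

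The paper gives no proof of this statement; it is quoted from Hoffman. Your argument is the standard quadratic-form proof of exactly this weighted ratio bound.

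Your proof does not use the sign of the weights. It also does not need the eigenvalue $d-1$ to be simple, since you only require every eigenvalue on $\one^{\perp}$ to be at least $-1$. Both points matter for how the paper applies the theorem. For instance, in Section~\ref{case2} the weight matrix is block diagonal with $\abs{G}/\abs{T}$ identical blocks, so $d-1=2\ell-1$ occurs with large multiplicity.
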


	

\section{Preliminaries}\label{Preliminaries}

Recall that $\CS_\chi$ is the cuspidal representation of $\GL_2(q)$. With respect to $\GL_2(q) \curvearrowright \CS_\chi$, define $\eta(g) := \dim_{\BC}\cb{v \in \CS_\chi: gv = v}$ where $g \in \GL_2(q)$. For $H \ls \BK^*$ and $z \in \BK^*$, we denote $\ord(zH) := \min\cb{d \gs 1: z^d \in H}$. In particular, $\ord(z) = \min\cb{d \gs 1: z^d = 1}$. If $P$ is a sentence, write $\one\fk{P} = 1$ if $P$ holds, otherwise $0$. In particular, if $a$ is a number or a map then write $\one\fk{a} := \one\fk{a = 1}$. Write $g_1\sim g_2$ if $g_1$ and $g_2$ are conjugate in $\GL_2(q)$. Throughout Sections \ref{Preliminaries} and \ref{Proof} we use the hypothesis of Theorem \ref{main} and put $\ell := \abs{\chi(\BF^*)}$, $m := (q - 1)/\ell = \abs{\ker\chi\cap\BF^*}$, $k := \min\cb{k' \gs t: k' \mid q - 1}$. Discrete logarithms are regarded as elements of $\BZ/(q - 1)\BZ$ or $\BZ/(q^2 - 1)\BZ$ respectively. In divisibility statements we use any integer lift; all moduli used divide the corresponding group order.

\subsection{Calculation of $\eta$}
We begin by computing the values of $\eta$ on the conjugacy classes, since $\eta$ determines whether two group elements are $t$-intersecting. Rather than working directly from the definition of $\eta$, we use the following convenient formula $\eta(g) = \frac{1}{\ord(g)}\cdot\sum\limits_{i = 1}^{\ord(g)}W_\chi(g^i)$.

\begin{lemma}\label{eta}
	
With respect to $\GL_2(q) \curvearrowright \CS_\chi$, for any $g \in \GL_2(q)$, we have:
	\[\eta(g) = \left\{\begin{aligned}
		&(q - 1)\cdot\one\fk{\chi(x)},&&\text{ if } g \sim c_1(x);\\
		&\pt{q/p - 1}\cdot\one\fk{\chi(x)},&&\text{ if } g \sim c_2(x);\\
		&(q - 1)\cdot\ord(x/y)^{-1}\cdot\one\fk{\chi(y)^{\ord(x/y)}},&&\text{ if } g \sim c_3(x, y);\\
		&(q + 1)\cdot\ord(z\BF^*)^{-1}\cdot\one\fk{\chi(z)^{\ord(z\BF^*)}} - 2\cdot\one\fk{\chi(z)},&&\text{ if } g \sim c_4(z).
	\end{aligned}\right.\]	
	
\end{lemma}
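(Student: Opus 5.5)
The plan is to compute $\eta(g)$ as the multiplicity of the trivial character in the restriction of $\CS_\chi$ to the cyclic group $\jk{g}$. The fixed space $\CS_\chi^{g}$ is the $\jk{g}$-invariant subspace, so its dimension is $\frac{1}{\ord(g)}\sum_{i=1}^{\ord(g)}W_\chi(g^i)$, which is the formula quoted before the lemma. Here $W_\chi$ is the character of $\CS_\chi$, i.e. $W_\lambda$ with $\lambda=\chi$ in Theorem \ref{table}. Since $\eta$ is a class function, it suffices to treat the four representatives $c_1(x),c_2(x),c_3(x,y),c_4(z)$. For each one I will determine the conjugacy class of every power $g^i$, read off $W_\chi(g^i)$ from Theorem \ref{table}, and evaluate the resulting sums by orthogonality of characters on cyclic groups: $\sum_{j \bmod n}\zeta^j=n\cdot\one\fk{\zeta}$ whenever $\zeta^n=1$.

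\textbf{Case $c_1(x)$.} We have $g^i=c_1(x^i)$, so $W_\chi(g^i)=(q-1)\chi(x)^i$. Averaging over $i$ modulo $\ord(x)$ gives $(q-1)\one\fk{\chi(x)}$.

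\textbf{Case $c_2(x)$.} Write $g=x(1+N)$ with $N$ nilpotent, so $g^i=x^i(1+iN)$. Since $\ord(x)\mid q-1$ is prime to $p$, we get $\ord(g)=p\,\ord(x)$. The power $g^i$ is conjugate to $c_2(x^i)$ when $p\nmid i$, with value $-\chi(x)^i$, and equals $c_1(x^i)$ when $p\mid i$, with value $(q-1)\chi(x)^i$. I will split the sum as $-\sum_i\chi(x)^i+q\sum_{p\mid i}\chi(x)^i$.
\begin{itemize}
\item The first sum is $-p\,\ord(x)\one\fk{\chi(x)}$.
\item The second sum is $q\,\ord(x)\one\fk{\chi(x)^p}$. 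Since the order of $\chi(x)$ is prime to $p$, this equals $q\,\ord(x)\one\fk{\chi(x)}$.
\end{itemize}
Dividing the total by $p\,\ord(x)$ gives $(q/p-1)\one\fk{\chi(x)}$.

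\textbf{Case $c_3(x,y)$.} Here $g^i=\mathrm{diag}(x^i,y^i)$. This is central exactly when $\ord(x/y)\mid i$, in which case $W_\chi(g^i)=(q-1)\chi(y)^i$; otherwise $g^i$ is split non-central and the character value is $0$. Summing over $i=\ord(x/y)\,j$ with $j$ ranging modulo $\ord(g)/\ord(x/y)$ gives $(q-1)\frac{\ord(g)}{\ord(x/y)}\one\fk{\chi(y)^{\ord(x/y)}}$. Dividing by $\ord(g)$ gives the stated value.

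\textbf{Case $c_4(z)$.} First I note that $c_4(z)$ has eigenvalues $z,z^q$, so $c_4(z)^i\sim c_4(z^i)$ whenever $z^i\notin\BF^*$. When $z^i\in\BF^*$, the power is the scalar $c_1(z^i)$. To handle this uniformly, write $W_\chi(g^i)=-\chi(z^i)-\chi(z^{qi})+\delta_i$, where $\delta_i=0$ unless $z^i\in\BF^*$. In that case $\chi(z^{qi})=\chi(z^i)$, so $\delta_i=(q+1)\chi(z^i)$.
\begin{itemize}
\item The uncorrected part sums to $-\ord(z)\big(\one\fk{\chi(z)}+\one\fk{\chi(z)^q}\big)$. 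This equals $-2\ord(z)\one\fk{\chi(z)}$, because the order of $\chi(z)$ divides $q^2-1$ and is therefore prime to $q$.
\item The correction runs over $i=\ord(z\BF^*)\,j$. It contributes $(q+1)\frac{\ord(z)}{\ord(z\BF^*)}\one\fk{\chi(z)^{\ord(z\BF^*)}}$.
\end{itemize}
Dividing by $\ord(z)$ gives the formula.

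The only delicate points are the bookkeeping in the $c_2$ and $c_4$ cases. In the $c_2$ case one must identify which powers become scalar and use that $p$ is coprime to the order of $\chi$. In the $c_4$ case one must correctly add back the scalar powers, where the $c_4$ formula of Theorem \ref{table} does not apply. I expect the $c_4$ correction term to be the main place where care is needed.
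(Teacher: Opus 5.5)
Your proposal is correct and follows the paper's own proof. It uses the same averaging formula $\eta(g)=\frac{1}{\ord(g)}\sum_i W_\chi(g^i)$, the same case-by-case identification of which powers of $g$ become scalar, and the same coprimality-to-$p$ argument to replace $\one\fk{\chi(x)^p}$ and $\one\fk{\chi(z)^q}$ by $\one\fk{\chi(x)}$ and $\one\fk{\chi(z)}$; your $\delta_i$ correction in the $c_4$ case is exactly the paper's two added sums over multiples of $\ord(z\BF^*)$, combined into the single coefficient $q+1$.
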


\begin{proof}
	
From the fourth row of the table in Theorem \ref{table}, we have the following cases:
 
\noindent\textbf{Case 1:} $g \sim c_1(x)$ for some $x \in \BF^*$. Note that $\ord(c_1(x)) = \ord(x)$, thus
\[\eta(c_1(x)) = \frac{1}{\ord(x)}\cdot\sum_{i = 1}^{\ord(x)}(q - 1)\chi(x)^i = (q - 1)\cdot\one\fk{\chi(x)}.\]
\textbf{Case 2:} $g \sim c_2(x)$ for some $x \in \BF^*$. Note that $\ord(c_2(x)) = p\cdot\ord(x)$ and $c_2(x)^i \sim c_1(x^i)$ if $p \mid i$ and $c_2(x)^i \sim c_2(x^i)$ if $p \nmid i$. Also note that $\one\fk{\chi(x)} = \one\fk{\chi^p(x)}$ since $\gcd(p, q \pm 1) = 1$, thus
\[\eta(c_2(x)) = \frac{1}{p\cdot\ord(x)}\cdot\cb{\sum_{i = 1}^{\ord(x)}(q - 1)\chi(x)^{pi} - \sum^{p\cdot\ord(x)}_{i = 1}\chi(x)^i + \sum_{i = 1}^{\ord(x)}\chi(x)^{pi}} = \pt{q/p - 1}\cdot\one\fk{\chi(x)}.\]
\textbf{Case 3:} $g \sim c_3(x, y)$ for some $x, y \in \BF^*$ with $x \ne y$. Put $r := x/y$ and $s := y^{\ord(x/y)}$, so $\ord(c_3(x, y)) = \ord(r)\cdot\ord(s)$, thus
\[\eta(c_3(x, y)) = \frac{1}{\ord(r)\cdot\ord(s)}\cdot\sum_{i = 1}^{\ord(s)}(q - 1)\chi(s)^i = \frac{q - 1}{\ord(r)}\cdot\one\fk{\chi(s)}.\]
\textbf{Case 4:} $g \sim c_4(z)$ for some $z \in \BK^*$ with $z \ne z^q$. Put $e := \ord(z\BF^*)$. Note that $\one\fk{\chi(z)} = \one\fk{\chi^p(z)}$ since $\gcd(p, q\pm1) = 1$, thus
\[\begin{aligned}
	\eta(c_4(z))
	&= \frac{1}{\ord(z)}\cdot\cb{\sum_{i = 1}^{\ord(z)/e}(q - 1)\chi(z)^{ei} - \sum_{i = 1}^{\ord(z)}\pt{\chi(z)^i + \chi^q(z)^i} + \sum_{i = 1}^{\ord(z)/e}\pt{\chi(z)^{ei} + \chi^q(z)^{ei}}}\\
	&= \frac{q + 1}{e}\cdot\one\fk{\chi(z)^e} - 2\cdot\one\fk{\chi(z)}.
\end{aligned}\]
The proof is completed.
\end{proof}

\subsection{Constructions of $t$-intersecting families}
Having computed $\eta$, we now construct several $t$-intersecting families to show that the extremal value in the Theorem \ref{main} is attained. In fact, these families are subgroups, so it suffices to verify that $\eta(g) \gs t$ for every element $g$. Recall $\ell := \abs{\chi(\BF^*)}$ and $\abs{\chi(\BK^1)} \ne 1$.

\begin{lemma}\label{K1}
	
$K_1 := \cb{g \in \GL_2(q): \det(g) \in \ker\chi\cap\BF^*}$ is $1$-intersecting if $2 \nmid \ell$. Moreover, $\abs{K_1} = \abs{\GL_2(q)}/\ell$.
	
\end{lemma}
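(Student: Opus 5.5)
The plan is to check the subgroup structure and the order first, then verify $\eta(g)\gs 1$ class by class with Lemma \ref{eta}.

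\emph{Subgroup and order.} $K_1$ is the preimage of the subgroup $\ker\chi\cap\BF^*\ls\BF^*$ under the homomorphism $\det:\GL_2(q)\to\BF^*$, so it is a subgroup. Since $\det$ is surjective, $\abs{K_1}=\abs{\GL_2(q)}/[\BF^*:\ker\chi\cap\BF^*]=\abs{\GL_2(q)}/\ell$, because $\BF^*/(\ker\chi\cap\BF^*)\cong\chi(\BF^*)$ has order $\ell$. Since $K_1$ is a group, $g_2^{-1}g_1\in K_1$ for $g_1,g_2\in K_1$. Hence it suffices to show $\eta(g)\gs 1$ for every $g\in K_1$, and we may replace $g$ by a conjugate.

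\emph{Parity trick.} The key observation used in every case is the following. If $u\in\BF^*$ satisfies $\chi(u)^2=1$, then $\chi(u)=1$, because $\chi(u)\in\chi(\BF^*)$ has order dividing the odd number $\ell$.

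\emph{Case analysis.}
\begin{itemize}
\item For $g\sim c_1(x)$ or $c_2(x)$ we have $\det g=x^2$, so $\chi(x)^2=1$ and hence $\chi(x)=1$. Lemma \ref{eta} then gives $\eta=q-1$ and $\eta=q/p-1\gs 1$ respectively; the second bound uses $f\gs 2$.
\item For $g\sim c_3(x,y)$, put $d=\ord(x/y)$. Then $x^d=y^d$, so $\chi(y^d)^2=\chi((xy)^d)=1$, and the parity trick gives $\chi(y)^d=1$. Since $d\mid q-1$, we get $\eta=(q-1)/d\gs 1$.
\item For $g\sim c_4(z)$, put $e=\ord(z\BF^*)$. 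Then $e\mid q+1$ and $e\gs 2$, because $z\notin\BF^*$. Since $z^e\in\BF^*$ we have $z^{eq}=z^e$, so $z^{2e}=(z^{q+1})^e\in\ker\chi$. Thus $\chi(z^e)^2=1$ with $z^e\in\BF^*$, and the parity trick gives $\chi(z)^e=1$. Lemma \ref{eta} therefore gives
\[\eta(g)=\frac{q+1}{e}-2\cdot\one\fk{\chi(z)}.\]
If $\chi(z)\ne 1$, this is at least $1$.
\end{itemize}

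\emph{Main obstacle: $\chi(z)=1$.} In this case we need $e\ls (q+1)/3$. I would use the isomorphism $\BK^*/\BF^*\to\BK^1$, $z\mapsto z^{q-1}$. It shows that $z^{q-1}$ has order exactly $e$ in $\BK^1$. Moreover $\chi(z^{q-1})=1$, so $e$ divides $\abs{\ker\chi\cap\BK^1}=(q+1)/r$, where $r=\abs{\chi(\BK^1)}\gs 2$.

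It therefore remains to exclude $r=2$ when $\chi(z)=1$. Note that $\eta\gs 0$ automatically, since $\eta$ is a dimension, so $\eta=0$ could only occur when $e=(q+1)/2$. My plan to rule this out is to combine three facts:
\begin{itemize}
\item $\chi(-1)=1$, which holds since $-1\in\BF^*$ and $\ell$ is odd;
\item $-1$ is the unique element of order $2$ in the cyclic group $\BK^1$;
\item the structure of $\chi$ on $\BK^*=\jk{\omega}$, writing $\chi(\omega)$ in terms of $\chi(\varepsilon)=\chi(\omega)^{q+1}$, which has odd order.
\end{itemize}
From these I would derive a contradiction with $\chi(z)=1$ and $e=(q+1)/2$, for instance by showing that $\chi$ of order $2$ on $\BK^1$ forces $\chi(\omega^{q-1})=-1$, which is incompatible with an odd order on $\BF^*$. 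I expect this last arithmetic check to be the delicate step of the proof.
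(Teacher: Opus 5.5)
\textbf{Gap in the $c_4$ case with $\chi(z)=1$.} Most of your proposal is correct and matches the paper: the subgroup and order computation, the parity trick, the $c_1,c_2,c_3$ cases, and the reduction of the $c_4$ case to proving $e=\ord(z\BF^*)\ls (q+1)/3$ when $\chi(z)=1$. That last inequality is the heart of the lemma, and you do not prove it.

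The route you sketch cannot work, because $r=\abs{\chi(\BK^1)}=2$ is compatible with odd $\ell$. Take $q=27$ and $\chi(\omega)=\sqrt{-1}$. Then $\chi(\omega^{q-1})=-1$, so $r=2$, while $\chi(\BF^*)=\jk{\chi(\omega)^{28}}$ is trivial, so $\ell=1$. With $\chi(\omega)$ a primitive $52$nd root of unity instead, one still has $r=2$, and now $\ell=13$.

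Moreover, the only information you keep, namely $z^{q-1}\in\ker\chi\cap\BK^1$, genuinely allows $e=(q+1)/2$. In the first example, $z=\omega^2$ satisfies $\chi(z^{q-1})=1$ and $\ord(z\BF^*)=14=(q+1)/2$; it escapes only because $\chi(z)=-1$. So any valid argument must use $\chi(z)=1$ itself, not just $\chi(z^{q-1})=1$.

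\textbf{The missing step.} This is the paper's claim. Put $n:=\abs{\chi(\BK^*)}$. Since $\chi(z)=1$, the element $z^{q-1}$ lies in the image of $\ker\chi$ under $x\mapsto x^{q-1}$. This map has kernel $\ker\chi\cap\BF^*$ of order $m$, so its image has order $\frac{(q^2-1)/n}{(q-1)/\ell}=\ell(q+1)/n$.

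Next, $n/\ell\gs 3$. Indeed, if $n\in\cb{\ell,2\ell}$, then $n\mid 2\ell\mid q-1$, because $\ell$ is odd and divides the even number $q-1$. Then $\chi^{q-1}=1$, so $\chi$ is trivial on $\BK^1$, a contradiction. Hence $e=\ord(z^{q-1})\ls (q+1)/3$, and $\eta(g)\gs 1$.

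In your own notation the fix reads as follows. Write $z=\omega^j$ with $n\mid j$. Then $e=(q+1)/2$ would force $\gcd(j,q+1)=2$, hence $\gcd(n,q+1)\mid 2$. But $\gcd(n,q+1)=n/\ell$, because $\chi(\BF^*)=\jk{\chi(\omega)^{q+1}}$. This again gives $n\mid 2\ell\mid q-1$, the same contradiction.
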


\begin{proof}

Above all, we claim: $q + 1 \gs 3\cdot\ord(z\BF^*)$ if $\chi(z) = 1$. Reason: Put $n := \abs{\chi(\BK^*)}$. Since $2 \nmid \ell$, $2\ell\mid q - 1$; therefore, if $n/\ell \in \cb{1, 2}$, then $n \mid q - 1$, making $\abs{\chi(\BK^1)} = 1$, a contradiction, hence $n/\ell \gs 3$. Now the map $x \mapsto x^{q - 1}$ on $\ker\chi$ has kernel $\ker\chi \cap \BF^*$, so its image has order $\frac{(q^2 - 1)/n}{(q - 1)/\ell} = \frac{\ell(q + 1)}{n}$. Then $\ord(z\BF^*) = \ord(z^{q - 1}) \ls \ell(q + 1)/n \ls (q + 1)/3$.
	
Firstly suppose $g \sim c_i(x) \in K_1$ where $i \in \cb{1, 2}$, thus $\chi(x)^2 = \chi(x^2) = \chi(\det g) = 1$, but $2 \nmid \ell$ implies $\chi(x) = 1$, so $\eta(c_i(x)) \gs 1$ by Lemma \ref{eta}. Secondly suppose $g \sim c_3(x, y) \in K_1$, put $r := x/y$ and $s := x^{\ord(r)} = y^{\ord(r)}$. Since $\chi(x)\chi(y) = \chi(xy) = \chi(\det g) = 1$, we have $\chi(x) = \chi(y)^{-1}$, substituting this in $\chi(x)^{\ord(r)} = \chi(s) = \chi(y)^{\ord(r)}$ gives $\chi(s)^2 = 1$, but $2 \nmid \ell$ implies $\chi(s) = 1$, so $\eta(c_3(x, y)) \gs 1$ by Lemma \ref{eta}. Finally suppose $g \sim c_4(z) \in K_1$, put $e := \ord(z\BF^*)$.  Since $\chi(z)^{2e} = \chi((z^e)^{q + 1}) = \chi((z^{q + 1})^e) = \chi(\det g)^e = 1$, but $2 \nmid \ell$ implies $\chi(z)^e = 1$, so Lemma \ref{eta} and the claim above give $\eta(c_4(z)) = \frac{q + 1}{e} - 2\cdot\one[\chi(z)] \gs 1$.
\end{proof}

\begin{lemma}\label{K2}
	
$K_2 := \cb{g \in \GL_2(q): \det(g) \in (\ker\chi\cap\BF^*)^2}$	is $2$-intersecting if $\chi(-1) = 1$. Moreover, $\abs{K_2} = \abs{\GL_2(q)}/2\ell$.
	
\end{lemma}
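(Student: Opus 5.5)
The plan has two parts: compute $\abs{K_2}$ from the determinant map, then check $\eta(g)\gs 2$ class by class using Lemma \ref{eta}. Since $K_2$ is a subgroup (the preimage under $\det$ of a subgroup of $\BF^*$), $2$-intersection is equivalent to $\eta(g)\gs 2$ for every $g\in K_2$.

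\textbf{Size of $K_2$.} The map $\det:\GL_2(q)\to\BF^*$ is surjective, so $\abs{K_2}=\abs{\GL_2(q)}\cdot\abs{(\ker\chi\cap\BF^*)^2}/(q-1)$. The group $\ker\chi\cap\BF^*$ is cyclic of order $m=(q-1)/\ell$. The hypothesis $\chi(-1)=1$ puts $-1$ in it, so $m$ is even. Hence its subgroup of squares has order $m/2$, which gives $\abs{K_2}=\abs{\GL_2(q)}/2\ell$.

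\textbf{The elements of $K_2$.} Write $\det g=u^2$ with $u\in\ker\chi\cap\BF^*$. The recurring trick is that any $a\in\BF^*$ with $a^2=u^{2n}$ satisfies $a=\pm u^n$, and therefore $\chi(a)=1$ because $\chi(\pm1)=1$.
\begin{itemize}
\item For $c_1(x)$ and $c_2(x)$: we have $x^2=u^2$, so $\chi(x)=1$. Lemma \ref{eta} gives $\eta=q-1$ or $q/p-1$, and both are $\gs 2$ because $f\gs2$ and $p\gs3$.
\item For $c_3(x,y)$: put $r=x/y=xy/y^2$, which is a square in $\BF^*$, so $\ord(r)\mid (q-1)/2$. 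Put $s=y^{\ord(r)}=x^{\ord(r)}$. Then $s^2=(xy)^{\ord(r)}=u^{2\ord(r)}$, so $\chi(s)=1$ and $\eta=(q-1)/\ord(r)\gs 2$.
\end{itemize}

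\textbf{The class $c_4(z)$.} Let $e=\ord(z\BF^*)$. First, $z^e\in\BF^*$ satisfies $(z^e)^2=(z^e)^{q+1}=u^{2e}$, so $\chi(z)^e=1$ and Lemma \ref{eta} gives $\eta=(q+1)/e-2\cdot\one\fk{\chi(z)}$.

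Next set $w:=z/u$. Since $u^{q+1}=u^2$, we get $w^{q+1}=1$, so $w\in\BK^1$. Also $e$ equals the order of the image of $w$ in $\BK^1/(\BK^1\cap\BF^*)=\BK^1/\cb{\pm1}$. That quotient is cyclic of order $(q+1)/2$, so $e\ls (q+1)/2$. This settles the case $\chi(z)\ne1$, where $\eta=(q+1)/e\gs 2$.

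The case $\chi(z)=1$ is the main obstacle, since we now need $e\ls (q+1)/4$. This is the analogue of the claim in Lemma \ref{K1}, and it is the one place where $\abs{\chi(\BK^1)}\ne1$ enters.
\begin{itemize}
\item Here $\chi(w)=\chi(z)\chi(u)^{-1}=1$, so $w\in H:=\ker\chi\cap\BK^1$.
\item $H$ is a proper subgroup of the cyclic group $\BK^1$, because $\chi$ is nontrivial on $\BK^1$. Hence $\abs{H}\ls (q+1)/2$.
\item $H$ contains $-1$, since $\chi(-1)=1$. So its image in $\BK^1/\cb{\pm1}$ has order $\abs{H}/2\ls (q+1)/4$.
\end{itemize}
The order $e$ of the image of $w$ divides this, so $(q+1)/e\gs4$ and $\eta\gs 4-2=2$. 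This completes the check on all four classes.
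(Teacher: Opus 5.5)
Your proposal is correct and follows essentially the paper's route. Both arguments count $\abs{K_2}$ via the square subgroup of the cyclic group $\ker\chi\cap\BF^*$ (which has even order because $-1$ lies in it), then check $\eta\gs2$ class by class with Lemma \ref{eta}, the key bound $\ord(z\BF^*)\ls(q+1)/4$ coming from $\ker\chi\cap\BK^1$ being a proper subgroup of $\BK^1$ that contains $-1$. The only difference is cosmetic: you divide out $u$ directly (the $a=\pm u^n$ trick and $w=z/u$), whereas the paper first reduces to $K_2\cap\SL_2(q)$ using that the scalar $c_1(u)$ acts trivially on $\CS_\chi$.
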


\begin{proof}

Above all, we claim: $q + 1 \gs 4 \cdot \ord(z\BF^*)$ if $z \in \BK^1$ and $\chi(z) = 1$. Reason: Put $n := \abs{\chi(\BK^1)} \gs 2$. $\ker\chi\cap\BK^1$ is cyclic of order $(q + 1)/n$ and contains $-1$, so its order is even and its subgroup of squares, has order $(q + 1)/2n$ and contains $z^{q - 1} = z^{-2}$. Consequently, $\ord(z\BF^*) = \ord(z^{q - 1}) \ls (q + 1)/2n \ls (q + 1)/4$.
	
Let $g \in K_2$, thus $\det g = x^2$ for some $x \in \ker\chi \cap \BF^*$, note that $c_1(x)$ acts trivially on $\CS_\chi$, so $\eta(g) = \eta(x^{-1}g)$. It suffices to prove $\eta(g) \gs 2$ for any $g \in K_2 \cap \SL_2(q)$. 

Firstly suppose $g \sim c_i(\pm 1) \in K_2$ where $i \in \cb{1, 2}$, so $\eta(c_i(\pm1)) \gs 2$ by Lemma \ref{eta} and $\chi(-1) = 1$. Secondly suppose $g \sim c_3(x, x^{-1}) \in K_2$, note that $x^{\ord(x^2)} = \pm 1$, so $\eta(c_3(x, x^{-1})) = \frac{q - 1}{{\ord(x^2)}} \gs 2$ by Lemma \ref{eta} and $\chi(-1) = 1$. Finally suppose $g \sim c_4(z) \in K_2$ where $z \in \BK^1$, put $e := \ord(z\BF^*)$. Since $z^e \in \BK^1 \cap \BF^* = \cb{\pm1}$, so $\eta(c_4(z)) = \frac{q + 1}{e} - 2\cdot\one\fk{\chi(z)} \gs 2$ by Lemma \ref{eta}, the claim above and $\chi(-1) = 1$.
\end{proof}

Both sets are determinant preimages of subgroups. Since the determinant map is onto and $m = \abs{\ker\chi\cap\BF^*}$, the first, $K_1$, has index $\ell$. For the second, $K_2$, $\chi(-1) = 1$ implies $-1 \in \ker\chi\cap\BF^*$, so $m$ is even and its square subgroup has order $m/2$, giving index $2\ell$.

Recall $k = k(t) := \min\cb{k' \gs t: k'\mid q - 1}$ and put $(\BF^*)^k := \cb{x^k: x \in \BF^*}$.
\begin{lemma}\label{Borel}
	
$B_t := \cb{\jz{a}{b}{}{d} \in \GL_2(q): a/d\in(\BF^*)^k \text{ and } \chi(d) = 1}$ is $t$-intersecting. Moreover,  $\abs{B_t} = \abs{\GL_2(q)}/k\ell(q + 1)$.
	
\end{lemma}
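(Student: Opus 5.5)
First I check that $B_t$ is a subgroup and compute its order. Then I verify $\eta(g)\gs t$ for every $g\in B_t$, using the class-by-class formula of Lemma \ref{eta}. Once that holds, $\dim\cb{v: g_1v=g_2v}=\eta(g_2^{-1}g_1)\gs t$ for all $g_1,g_2\in B_t$.

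For the subgroup property, note that $\jz{a}{b}{}{d}\mapsto (a/d,d)$ is a homomorphism from the Borel subgroup $B$ onto $\BF^*\times\BF^*$. The set $B_t$ is the preimage of the subgroup $(\BF^*)^k\times(\ker\chi\cap\BF^*)$. Since $k\mid q-1$, we have $\abs{(\BF^*)^k}=(q-1)/k$, and $\abs{\ker\chi\cap\BF^*}=m=(q-1)/\ell$. The kernel consists of the unipotent elements, of order $q$. Hence
\[
\abs{B_t}=\frac{q(q-1)^2}{k\ell}=\frac{\abs{\GL_2(q)}}{k\ell(q+1)},
\]
using $\abs{\GL_2(q)}=q(q-1)^2(q+1)$.

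Every element of $B_t$ is upper triangular with eigenvalues $a,d\in\BF^*$. It is therefore conjugate to $c_1(d)$, to $c_2(d)$, or to $c_3(a,d)$; no $c_4$ classes occur, which removes the hardest case of Lemma \ref{eta}.

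If $a=d$, then $\chi(d)=1$. Lemma \ref{eta} gives $\eta=q-1$ for $c_1(d)$ and $\eta=q/p-1$ for $c_2(d)$. Both are at least $t$: we have $t<p$ and $f\gs2$, so $q/p-1\gs p-1\gs t$.

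If $a\ne d$, the class is $c_3(a,d)$. Put $r:=a/d\in(\BF^*)^k\setminus\{1\}$. Then $\chi(d)^{\ord(r)}=1$, so Lemma \ref{eta} gives $\eta=(q-1)/\ord(r)$. Since $r$ is a $k$-th power in the cyclic group $\BF^*$ of order $q-1$, $\ord(r)$ divides $(q-1)/k$, and so $\eta\gs k\gs t$. For $k$ to exist we need some divisor $k'\gs t$ of $q-1$; the divisor $k'=q-1$ itself works, since $q-1\gs t$.

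The only delicate point is keeping track of which eigenvalue goes with which parameter. The relevant eigenvalue is $y=d$ in $c_3(x,y)$, and Lemma \ref{eta}'s formula is symmetric in the sense that $\chi(x)^{\ord(x/y)}=\chi(y)^{\ord(x/y)}$, because $x^{\ord(x/y)}=y^{\ord(x/y)}$. So the condition $\chi(d)=1$ suffices regardless of the ordering. Beyond that the argument is routine.
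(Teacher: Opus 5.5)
Your proposal is correct and follows essentially the same route as the paper: it uses the same homomorphism argument to get the subgroup property and the order $q\cdot m\cdot\frac{q-1}{k}$, and the same class-by-class check via Lemma \ref{eta}. That check uses $q/p-1\gs t$ for the $c_1,c_2$ classes, and $\ord(a/d)\mid\frac{q-1}{k}$ to get $\eta\gs k\gs t$ for $c_3(a,d)$. Your remarks on the symmetry $x^{\ord(x/y)}=y^{\ord(x/y)}$ and on the existence of $k$ are harmless details the paper leaves implicit.
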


\begin{proof}
	
The diagonal maps $g \mapsto d$ and $g \mapsto a/d$ are homomorphisms on the upper triangular subgroup; hence $B_t$ is a subgroup. Its order is $\abs{B_t} = qm\cdot\frac{q - 1}{k} = \abs{\GL_2(q)}/k\ell(q + 1)$. Let $g \in B_t$ with diagonal elements $x$ and $y$. Firstly suppose $x = y$, thus $g \sim c_i(x)$ where $i \in \cb{1, 2}$, so $\eta(c_i(x)) \gs t$ by Lemma \ref{eta}, $\chi(x) = 1$ and $p > t$. Secondly suppose $x \ne y$, thus $g \sim c_3(x, y)$, so $\eta(c_3(x, y)) = \frac{q - 1}{\ord(x/y)} \gs \frac{q - 1}{\abs{(\BF^*)^k}} = k \gs t$ by Lemma \ref{eta} and $\chi(y) = 1$.
\end{proof}

\subsection{Calculations of character sums}

As Theorem \ref{Babai} shows, computing the eigenvalues involves evaluating numerous character sums. In this subsection, we evaluate several such sums. Recall that $\BF^* = \jk{\varepsilon}$, $\BK^* = \jk{\omega}$ and $\ell := \abs{\chi(\BF^*)}$.

\begin{lemma}\label{division}
	
(1) For $x \in \BF^*$, $\chi(x) = 1$ if and only if $\ell \mid \log_\varepsilon x$. (2) For $z \in \BK^*$, $2 \mid \ell$ and $\chi(z)^{\ord(z\BF^*)} = 1$ implies $2\ell \mid \log_\omega z$.
	
\end{lemma}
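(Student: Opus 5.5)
The plan is to reduce both parts to statements about the cyclic groups $\BF^* = \jk{\varepsilon}$ and $\BK^* = \jk{\omega}$, using discrete logarithms. The key fact is that $\chi(\varepsilon)$ is a primitive $\ell$-th root of unity: by definition $\ell = \abs{\chi(\BF^*)}$, and $\chi(\BF^*)$ is cyclic, generated by $\chi(\varepsilon)$.

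For part (1), write $x = \varepsilon^a$ with $a = \log_\varepsilon x$. Then $\chi(x) = \chi(\varepsilon)^a$, and this equals $1$ exactly when the order $\ell$ of $\chi(\varepsilon)$ divides $a$. This is well defined on $\BZ/(q-1)\BZ$ because $\ell \mid q - 1$.

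For part (2), put $c := \log_\omega z$ and $e := \ord(z\BF^*)$. Since $\BF^* = \jk{\omega^{q+1}}$ is the unique subgroup of index $q+1$, we have $z^d \in \BF^*$ if and only if $(q+1) \mid dc$. Hence
\[e = \frac{q+1}{\gcd(c, q+1)}.\]
Next I would compute $\chi(z)^e$ directly: $z^e = \omega^{ce} = \varepsilon^{ce/(q+1)}$, because $\omega^{q+1} = \varepsilon$. By part (1), the hypothesis $\chi(z)^e = \chi(z^e) = 1$ becomes
\[\ell \;\Big|\; \frac{ce}{q+1} = \frac{c}{\gcd(c, q+1)}.\]

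The remaining step is a parity argument, and I expect it to be the only real point. Write $g := \gcd(c, q+1)$ and $c = g c'$, so that $\ell \mid c'$. Since $\ell$ is even, $c'$ is even. Also $g$ is even: $q$ is odd, so $q+1$ is even, and $c = g c'$ is even, so $2$ divides both $c$ and $q+1$. Therefore $2\ell \mid g c' = c$.

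Finally, I would check that $2\ell \mid c$ is meaningful for a residue $c \in \BZ/(q^2-1)\BZ$. This holds because $2\ell \mid 2(q-1) \mid q^2 - 1$, which uses $q + 1$ even. The same computation also makes precise the paper's convention of working with integer lifts.
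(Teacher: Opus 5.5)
Your proof is correct and follows essentially the paper's route: part (1) is the same discrete-logarithm argument, and for part (2) you use the same formulas $e = (q+1)/\gcd(c, q+1)$ and $z^e = \varepsilon^{c/\gcd(c, q+1)}$, followed by a parity check. The only difference is cosmetic: you show directly that $\gcd(c, q+1)$ is even, since both $c$ and $q+1$ are even, whereas the paper obtains this by contradiction from the coprimality of $c/\gcd(c,q+1)$ and $(q+1)/\gcd(c,q+1)$.
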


\begin{proof}
	
(1) The assertion follows from $\chi(x) = \chi(\varepsilon)^{\log_\varepsilon x}$ and $\ell = \abs{\chi(\BF^*)}$. For (2), choose an integer lift $a = \log_\omega z$ and put $d = \gcd(a, q + 1)$. Then $e := \ord(z\BF^*) = (q + 1)/d$ and $z^e = \varepsilon^{a/d}$. By (1), $\chi(z)^e = 1$ is equivalent to $\ell \mid \frac{a}{d}$. If $d$ were odd, $(q + 1)/d$ would be even, and $\gcd(a/d, (q + 1)/d) = 1$ would force $a/d$ to be odd. This contradicts even $\ell$. Thus $d$ is even, and $a = d(a/d)$ is divisible by $2\ell$. 
\end{proof}

\begin{remark}\label{rem1}
	
For a complex multiplicative character $\alpha$ of $\BF^*$, we have $\one\fk{\alpha^{2\ell}} = \one\fk{\alpha^\ell}$ if $2 \nmid m$ since $(\alpha^\ell)^m = 1$, thus
\[\sum_{z \in \BK^*: \log_\omega z \equiv \ell\pmod{2\ell}}\alpha(z^{q + 1})
	= \sum_{i = 1}^{(q^2 - 1)/2\ell}\alpha(\varepsilon)^{\ell(2i + 1)}\\
	= \frac{m(q + 1)}{2}\cdot\one\fk{\alpha^\ell}.\]
and
\[\sum_{z \in \BK^*: \log_\omega z \not\equiv 0 \pmod{\ell}}\alpha(z^{q + 1})
	= \sum_{i = 1}^{q^2 - 1}\alpha(\varepsilon)^i - \sum_{i = 1}^{(q^2 - 1)/\ell}\alpha(\varepsilon)^{\ell i}\\
	= m(q + 1)\pt{\ell\cdot\one\fk{\alpha} - \one\fk{\alpha^\ell}}.\]	
	
\end{remark}

\begin{remark}\label{rem2}
	
For a complex multiplicative character $\lambda$ of $\BK^*$ with $\lambda \ne \lambda^q$, we have $\lambda^\ell \ne 1$ since $\ell \mid q - 1$, thus
\[\sum_{z \in \BK^*: \log_\omega z\equiv \ell\pmod{2\ell}}\lambda(z)
= \sum_{i = 1}^{(q^2 - 1)/2\ell}\lambda(\omega)^{\ell(2i + 1)}
= -\frac{m(q + 1)}{2}\cdot\one\fk{\lambda^{2\ell}}.\]
and
\[\sum_{z \in \BK^*: \log_\omega z\not\equiv0\pmod{\ell}}\lambda(z) 
= \sum_{i = 1}^{q^2 - 1}\lambda(\omega)^i - \sum_{i = 1}^{(q^2 - 1)/\ell}\lambda(\omega)^{\ell i}
= 0.\]	
\end{remark}

\begin{remark}\label{rem3}
	
Let $\sigma_\alpha := \frac{1}{k}\cdot\abs{\sum\limits_{i = 0}^{k - 1}\alpha(\varepsilon)^i}^2$, thus $\sigma_1 = k$. Also $0 \ls \sigma_\alpha \ls k$. Define $$U_x := \cb{(i, j): 0 \ls i, j < k \text{ and } i \ne j \text{ and } \varepsilon^{i - j} = x}.$$Swapping $i, j$ gives $\abs{U_x} = \abs{U_{x^{-1}}}$. In particular the definition does not discard negative differences. For two complex multiplicative characters $\alpha$ and $\beta$ of $\BF^*$, we have
	\[\begin{aligned}
		\sum_{x, y \in \BF^*: x \ne y}\abs{U_{x/y}}\alpha(x)\beta(y)
		&= \sum_{x, y \in \BF^*: x \ne y}\abs{U_{x/y}}\alpha(x/y)\cdot\alpha\beta(y)\\
		&= \sum_{r \in \BF^*: r \ne 1}\abs{U_r}\alpha(r)\cdot\sum_{y \in \BF^*}\alpha\beta(y)\\
		&= (q - 1)\cdot\one\fk{\alpha\beta}\cdot\sum_{r \in \BF^*: r \ne 1}\abs{U_r}\alpha(r)\\
		&= (q - 1)\cdot\one\fk{\alpha\beta}\cdot\fk{\pt{\sum_{i = 0}^{k - 1}\alpha(\varepsilon)^i}\pt{\sum_{j = 0}^{k - 1}\alpha(\varepsilon)^{-j}} - k}\\
		&= k(q - 1)(\sigma_\alpha - 1)\cdot\one\fk{\alpha\beta}.
	\end{aligned}\]Here $i \ne j$ implies $\varepsilon^{i - j} \ne 1$, since $0 < \abs{i - j} < k \ls q - 1$.
\end{remark}

\section{Complete Proof of Theorem \ref{main}}\label{Proof}

Let $\Gamma := \Cay(\GL_2(q), \cb{g \in \GL_2(q): \eta(g) < t})$. Then $S \subseteq \GL_2(q)$ is $t$-intersecting if and only if $S$ is a coclique in $\Gamma$. By Theorem \ref{Hoffman}, if we want to prove $\abs{S} \ls \abs{\GL_2(q)}/d$, it suffices to show there is a real adjacent $(d - 1)$-regular function on $\Gamma$ such that $\spec(w) \subseteq [-1, +\infty)$. The function $\eta$ is invariant under conjugacy and satisfies $\eta(g^{-1}) = \eta(g)$. Also $\eta(1) = q - 1 \gs t$. Thus $\Gamma$ is a simple undirected normal Cayley graph, and left translations preserves its edges. In each case below we identify the attaining subgroup before proving the matching upper bound.

\subsection{The cases $2\nmid\ell$ and $t = 1$}\label{case1}

Lemma \ref{K1} gives the lower bound $\abs{\GL_2(q)}/\ell$. If $\ell = 1$, the whole group $\GL_2(q)$ is already $1$-intersecting, and the result follows. Assume $\ell > 1$. Let $$w(g) := \frac{1}{m}\cdot\one\fk{g = c_1(x): \log_\varepsilon x \not\equiv 0\pmod{\ell}}.$$Note that $w$ is adjacent and $(\ell - 1)$-regular for $\Gamma$ by Lemma \ref{eta} and \ref{division}. Also note that it is a real symmetric class function, so we can apply Theorem \ref{Babai}. For any $\theta \in \CR(\GL_2(q))$, there exists a complex multiplicative character $\alpha_\theta$ of $\BF^*$, such that $\theta(c_1(x)) = \theta(1)\alpha_\theta(x)$. Hence$$mE_w(\theta) = \sum_{x \in \BF^*: \ell\nmid\log_\varepsilon x}\alpha_\theta(x) = \sum_{x \in \BF^*}\alpha_\theta(x) - \sum_{x \in \BF^*: \ell\mid\log_\varepsilon x}\alpha_\theta(x) = m\pt{\ell\cdot\one\fk{\alpha_\theta} - \one\fk{\alpha_\theta^\ell}}.$$So $\spec(w) = \cb{\ell - 1, 0, -1} \subseteq [-1, +\infty)$. Theorem \ref{Hoffman} gives $\abs{S} \ls \abs{\GL_2(q)}/\ell$, matching Lemma \ref{K1}. $\hfill\blacksquare$

\subsection{The cases $2 \mid \ell$, $\chi(-1) = 1$ and $t = 1$}\label{case2}

Lemma \ref{K2} provides a $2$-intersecting, hence $1$-intersecting subgroup of order $\abs{\GL_2(q)}/2\ell$. Let $$w(g) := \frac{2}{m(q + 1)}\cdot\one\fk{g= c_4(z): \log_\omega z\not\equiv0\pmod{2\ell}}.$$Note that $w$ is adjacent and $(2\ell - 1)$-regular for $\Gamma$ by Lemma \ref{eta} and \ref{division}. Set $G := \GL_2(q)$, $T := \cb{c_4(z): z \in \BK^*}$ and $H := \cb{c_4(z): z \in \BK^* \text{ and } 2\ell\mid\log_\omega z}$. This weight is supported on $T \setminus H$ in one fixed non-split torus; it need not be a class function. We calculate its spectrum directly rather than applying Theorem \ref{Babai}. In the support subgraph, each right coset of $T$ is a complete $2\ell$-partite graph, whose parts are cosets of $H$ of size $\abs{H} = m(q + 1)/2$. If $I_n$ and $J_n$ denote the identity and all-one matrices of order $n$, respectively, then, after ordering vertices by these cosets, $$w = \abs{H}^{-1}\cdot I_{\abs{G}/\abs{T}}\otimes(J_{2\ell} - I_{2\ell})\otimes J_{\abs{H}}.$$Hence $\spec(w) = \cb{2\ell - 1, 0, -1} \subseteq [-1, +\infty)$. Theorem \ref{Hoffman} gives the upper bound $\abs{\GL_2(q)}/2\ell$, equal to the construction in Lemma \ref{K2}.   $\hfill\blacksquare$

\subsection{The cases $\chi(-1) = 1$ and $t = 2$}\label{case3}

Lemma \ref{Borel} gives the lower bound $\abs{\GL_2(q)}/2\ell$. We prove the upper bound by an explicit clique and double counting. Let $C := \cb{c_1(\varepsilon^i), c_3(\varepsilon^{i + 1}, \varepsilon^i): 0 \ls i < \ell}$, thus $\abs{C} = 2\ell$. A quotient of two distinct $c_1$-elements, or two distinct $c_3$-elements, is $c_1(\varepsilon^{i - j})$, where $0 < \abs{i - j} < \ell$. Therefore $\chi(\varepsilon^{i - j}) \ne 1$, and Lemma \ref{eta} gives $\eta = 0$. A mixed quotient is a scalar multiple of $c_3(\varepsilon, 1)$ or $c_3(1/\varepsilon, 1)$, whose eigenvalue ratio has order $q - 1$. Since every character value on $\BF^*$ has order dividing $q - 1$, Lemma \ref{eta} gives $\eta = 1$. Thus every pair of distinct elements of $C$ is an edge of $\Gamma$, and $C$ is a clique of size $2\ell$.

For any $2$-intersecting family $S \subseteq \GL_2(q)$, without loss of generality, we may assume $1 \in S$. Then $\eta(g) \gs 2$ for any $g \in S$, and $gC \cap C = \varnothing$ for any $g \in S \setminus \cb{1}$. This yields$$\abs{S}\cdot\abs{C} = \sum_{g \in S}\abs{gC} \ls \abs{\GL_2(q)}$$hence $\abs{S} \ls \abs{\GL_2(q)}/2\ell$. This matches the lower bound. $\hfill\blacksquare$

\subsection{The cases $\chi(-1) = -1$}\label{case4}
	
Lemma \ref{Borel} gives the lower bound $\abs{\GL_2(q)}/k\ell(q + 1)$. Here $\ell$ is even and $m$ is odd. The weight below is designed so that the split term uses the length-$k$ cyclic correlation $\abs{U_{x/y}}$ to control element orders. The non-split term cancels unfavorable character sums, while the scalar and non-semisimple terms balance the remaining character contributions. We verify well-definedness and support first, then the row sum and spectral lower bound. For $z \in \BK^*$, define
\[u(z) := \left\{\begin{aligned}
	&0,&\quad&\text{if } \log_\omega z \equiv 0 \pmod{2\ell};\\
	&\frac{2}{mq}\cdot\pt{\frac{k}{q - 1} + \frac{1}{q + 1}},&\quad&\text{if } \log_\omega z \equiv \ell \pmod{2\ell};\\
	&\frac{k}{mq(q - 1)},&\quad&\text{if } \log_\omega z \not\equiv 0 \pmod{\ell}.
\end{aligned}\right.\]

Recall Remark \ref{rem3}. Define
\[w(g) := \left\{\begin{aligned}
	&\frac{q + 1}{2mq}\cdot\one\fk{\chi(x) \ne 1} - \frac{q - 1}{2}\cdot u(x),&\quad&\text{if } g\sim c_1(x);\\
	&\frac{1}{2mq}\cdot\one\fk{\chi(x) \ne 1} + \frac{1}{2}\cdot u(x),&\quad&\text{if } g\sim c_2(x);\\
	&\frac{1}{kmq}\cdot\abs{U_{x/y}},&\quad&\text{if } g\sim c_3(x, y);\\
	&u(z),&\quad&\text{if } g\sim c_4(z).
\end{aligned}\right.\]
This is a well-defined symmetric real class function. For the split classes, $\abs{U_r} = \abs{U_{r^{-1}}}$ handles both interchange of $x, y$ and inversion. For the non-split classes, multiplication of $\log_\omega z$ by $q$ or by $-1$ preserves the three cases defining $u$: use $q \equiv 1 \pmod{\ell}$, odd $q$, and even $\ell$. Thus $u(z^q) = u(z) = u(z^{-1})$. The scalar and non-semisimple cases are also invariant under inversion. Some weights may be negative, as permitted by Theorem \ref{Hoffman}.
\begin{lemma}
	
$w$ is adjacent for $\Gamma$, that is, $w(g) = 0$ if $\eta(g) \gs t$.	
	
\end{lemma}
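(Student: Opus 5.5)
The plan is to check the claim class by class, using the formula for $\eta$ in Lemma \ref{eta}. For each class we show that whenever $w(g)\ne 0$ we have $\eta(g)<t$; equivalently, $\eta(g)\gs t$ forces every term of $w(g)$ to vanish. Throughout, $\chi(-1)=-1$ makes $\ell$ even. We also use $t<p$ and $f\gs 2$, which give $q-1\gs q/p-1\gs t$.

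\textbf{Scalar and non-semisimple classes.} Let $g\sim c_1(x)$ or $g\sim c_2(x)$. Since $q-1\gs t$ and $q/p-1\gs t$, Lemma \ref{eta} shows that $\eta(g)\gs t$ holds exactly when $\chi(x)=1$. In that case the indicator $\one\fk{\chi(x)\ne1}$ vanishes, so it remains to show $u(x)=0$ for $x\in\BF^*$ with $\chi(x)=1$. Write $x=\varepsilon^a=\omega^{(q+1)a}$. By Lemma \ref{division}(1) we have $\ell\mid a$, and $q+1$ is even. Hence $2\ell\mid (q+1)a=\log_\omega x$, and therefore $u(x)=0$.

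\textbf{Split classes.} Let $g\sim c_3(x,y)$ and put $r=x/y$. If $U_r\ne\varnothing$, then $r=\varepsilon^d$ for some integer $d$ with $0<\abs{d}<k$. In that case $(q-1)/\ord(r)=\gcd(d,q-1)$, and this is a divisor of $q-1$ that is at most $\abs{d}<k$. By the minimality in the definition of $k$, every divisor of $q-1$ smaller than $k$ is smaller than $t$. Lemma \ref{eta} then gives $\eta(g)\ls (q-1)/\ord(r)<t$. Consequently $\eta(g)\gs t$ forces $U_{x/y}=\varnothing$, so $w(g)=0$.

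\textbf{Non-split classes.} Let $g\sim c_4(z)$ and put $e=\ord(z\BF^*)$. If $\chi(z)^e\ne1$, Lemma \ref{eta} gives $\eta(g)=-2\cdot\one\fk{\chi(z)}\ls 0<t$, so such $g$ are automatically edges. If $\chi(z)^e=1$, then Lemma \ref{division}(2) applies because $\ell$ is even, and it yields $2\ell\mid\log_\omega z$; by definition $u(z)=0$. Either way, $\eta(g)\gs t$ implies $w(g)=u(z)=0$.

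The only step that is not immediate is the split case. There one has to translate nonemptiness of $U_r$ into the bound $\gcd(d,q-1)<k$, and then use the minimality of $k$ among divisors of $q-1$ that are at least $t$. The remaining cases reduce directly to Lemmas \ref{eta} and \ref{division}.
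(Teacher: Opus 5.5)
Your proposal is correct and follows the paper's proof essentially line by line. You use the same three-way case split, the same appeal to Lemma \ref{division} to get $2\ell \mid \log_\omega x$ (resp.\ $2\ell\mid\log_\omega z$) so that $u$ vanishes, and the same reduction of the split case to $\gcd(d,q-1)<k$ followed by minimality of $k$; the only cosmetic differences are that you bound $\eta(c_3(x,y))\ls\gcd(d,q-1)$ where the paper writes $\eta\in\{0,d'\}$, and you spell out the branch $\chi(z)^e\ne1$ explicitly.
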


\begin{proof}
	
For $i \in \cb{1, 2}$, the inequalities $q/p - 1 \gs t$ and Lemma \ref{eta} give $\eta(c_i(x)) \gs t$ if and only if $\chi(x) = 1$. Then $\ell \mid \log_\varepsilon x$ by Lemma \ref{division}, and $2\ell \mid (q + 1)\log_\varepsilon x = \log_\omega x$, so $u(x) = 0$ and $w(c_i(x)) = 0$. 

For the split case, suppose $\abs{U_{r}} > 0$, where $r = x/y$. Then $r = \varepsilon^d$ for some $d = i - j$ with $0 < \abs{d} \ls k - 1$. Put $d' = \gcd(d, q - 1)$. We have $d' \mid q - 1$ and $d' < k$, so the definition of $k$ implies $d' < t$. Also$$\ord(r) = \frac{q - 1}{d'},\quad \eta(c_3(x, y)) \in \cb{0, d'}$$ by Lemma \ref{eta}. Hence a non-zero split weight implies $\eta(c_3(x, y)) < t$. This proves the required vanishing.

For $g \sim c_4(z)$, put $e = \ord(z\BF^*)$. If $\eta(g) \gs t$, Lemma \ref{eta} forces $\chi(z)^e = 1$. Since $\ell$ is even, Lemma \ref{division} gives $2\ell \mid \log_\omega z$, and thus $u(z) = 0$.
\end{proof}

\begin{proposition}\label{last}
	
Let $\alpha$ and $\beta$ be two complex multiplicative characters of $\BF^*$ with $\alpha \ne \beta$, let $\lambda$ be a complex multiplicative character of $\BK^*$ with $\lambda \ne \lambda^q$. The eigenvalues of $w$ are as follows:
\[E_w(\theta) = \left\{\begin{aligned}
	&\frac{\ell(q + 1)}{2}\cdot\pt{\sigma_\alpha\cdot\one\fk{\alpha^2} + k\cdot\one\fk{\alpha}} - \one\fk{\alpha^\ell},&\quad&\text{if } \theta = L_\alpha;\\
	&\frac{\ell(q + 1)}{2q}\cdot\pt{\sigma_\alpha\cdot\one\fk{\alpha^2} - k\cdot\one\fk{\alpha}} - \one\fk{\alpha^\ell},&\quad&\text{if } \theta = S_\alpha;\\
	&\ell\sigma_\alpha\cdot\one\fk{\alpha\beta} - \one\fk{(\alpha\beta)^\ell},&\quad&\text{if } \theta = P_{\alpha, \beta};\\
	&\pt{1 + \frac{k(q + 1)}{q - 1}}\cdot\one\fk{\lambda^{2\ell}},&\quad&\text{if } \theta = W_\lambda.
\end{aligned}\right.\]
	
\end{proposition}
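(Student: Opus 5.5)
Since $w$ is a real symmetric class function, Theorem \ref{Babai} applies, and I will compute $E_w(\theta)=\theta(1)^{-1}\sum_g\theta(g)w(g)$ class by class from the table in Theorem \ref{table}. I will split $\theta(1)E_w(\theta)$ into four contributions: scalar classes $c_1(x)$ (size $1$), unipotent-type classes $c_2(x)$ (size $q^2-1$), split classes $c_3(x,y)$ (size $q^2+q$, each unordered pair $\{x,y\}$ counted once, i.e.\ half of the ordered sum over $x\ne y$), and non-split classes $c_4(z)$ (size $q^2-q$, each pair $\{z,z^q\}$ counted once, i.e.\ half of the sum over $z\notin\BF^*$). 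A convenient first observation is that $u(z)$ is defined on all of $\BK^*$. For $z=x\in\BF^*$, one has $\log_\omega x=(q+1)\log_\varepsilon x$, so $u(x)$ equals the second value exactly when $\chi(x)\neq1$ (as $m$ is odd, $(q+1)\log_\varepsilon x\equiv\ell\pmod{2\ell}$ iff $\ell\nmid\log_\varepsilon x$), and it is never the third value. The correction terms $-\tfrac{q-1}{2}u(x)$ at $c_1$ and $+\tfrac12u(x)$ at $c_2$ are designed so that, after weighting by the class sizes and the $\theta$-values, they turn the non-split sum over $z\notin\BF^*$ into a sum over all $z\in\BK^*$. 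That full sum is then evaluated by Remarks \ref{rem1} and \ref{rem2}.

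\textbf{$\theta=P_{\alpha,\beta}$.} Here $\theta(c_4)=0$. The $c_1,c_2$ contributions combine as $(q+1)\alpha\beta(x)\bigl[\tfrac{q+1}{2mq}\one[\chi(x)\ne1]-\tfrac{q-1}{2}u(x)\bigr]+(q^2-1)\alpha\beta(x)\bigl[\tfrac{1}{2mq}\one[\chi(x)\ne1]+\tfrac12u(x)\bigr]$. The $u$-terms cancel, and what remains is $\tfrac{(q+1)^2+(q^2-1)}{2mq}=\tfrac{q+1}{m}$ times $\sum_{\chi(x)\ne1}\alpha\beta(x)=m(\ell\one[\alpha\beta]-\one[(\alpha\beta)^\ell])$, using Lemma \ref{division}(1). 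The split part is $\tfrac{q(q+1)}{2kmq}\sum_{x\ne y}\abs{U_{x/y}}(\alpha(x)\beta(y)+\alpha(y)\beta(x))$. By Remark \ref{rem3} this equals $\tfrac{(q+1)(q-1)}{m}(\sigma_\alpha-1)\one[\alpha\beta]$, using $\sigma_\beta=\sigma_\alpha$ when $\beta=\alpha^{-1}$. Dividing by $q+1$ and using $\ell=(q-1)/m$ gives $\ell\sigma_\alpha\one[\alpha\beta]-\one[(\alpha\beta)^\ell]$, as claimed.

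\textbf{$\theta=W_\lambda$.} The split part vanishes. The $c_1,c_2$ parts contribute $\lambda(x)$ times $(q-1)\bigl[\tfrac{q+1}{2mq}\one-\tfrac{q-1}{2}u\bigr]-(q^2-1)\bigl[\tfrac{1}{2mq}\one+\tfrac12u\bigr]$. The indicator terms cancel, and we are left with $-(q-1)q\,u(x)\lambda(x)$. The non-split part is $-\tfrac{q^2-q}{2}\sum_{z\notin\BF^*}u(z)(\lambda(z)+\lambda(z^q))=-(q^2-q)\sum_{z\notin\BF^*}u(z)\lambda(z)$, using $u(z^q)=u(z)$. This exactly absorbs the $\BF^*$-terms, so the total is $-q(q-1)\sum_{z\in\BK^*}u(z)\lambda(z)$. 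Remark \ref{rem2} then gives $\tfrac{q(q-1)}{q-1}\cdot\tfrac{2}{mq}\bigl(\tfrac{k}{q-1}+\tfrac1{q+1}\bigr)\tfrac{m(q+1)}{2}\one[\lambda^{2\ell}]$, since the third-value sum is $0$. Dividing by $\theta(1)=q-1$ yields $(1+\tfrac{k(q+1)}{q-1})\one[\lambda^{2\ell}]$.

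\textbf{$\theta=L_\alpha$ and $\theta=S_\alpha$.} These are treated together, with $\theta(c_4(z))=\pm\alpha(z^{q+1})$ and $\theta(c_2)=\alpha(x^2)$ or $0$. The same absorption mechanism applies. For $S_\alpha$, the $c_1$ factor $q$ and the $c_4$ factor $-(q^2-q)$ should combine through $-\tfrac{q-1}{2}u(x)\cdot q$. For $L_\alpha$, the $c_1$ and $c_2$ $u$-corrections give $u(x)\alpha(x^2)\bigl(-\tfrac{q-1}{2}+\tfrac{q^2-1}{2}\bigr)=\tfrac{q(q-1)}{2}u(x)\alpha(x^2)$, matching the $c_4$ coefficient $\tfrac{q^2-q}{2}$ on the full sum. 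Note that $\alpha(z^{q+1})$ for $z=x$ is $\alpha(x^2)$, consistent with this matching. The full sums $\sum_z u(z)\alpha(z^{q+1})$ come from Remark \ref{rem1}: they give $\tfrac{m(q+1)}2\one[\alpha^\ell]$ and $m(q+1)(\ell\one[\alpha]-\one[\alpha^\ell])$. The split part uses Remark \ref{rem3} with $\beta=\alpha$, producing $\one[\alpha^2](\sigma_\alpha-1)$. The indicator terms at $c_1,c_2$ give $\sum_{\chi(x)\ne1}\alpha(x^2)=m(\ell\one[\alpha^2]-\one[\alpha^{2\ell}])$, and $\one[\alpha^{2\ell}]=\one[\alpha^\ell]$ since $m$ is odd.

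The main obstacle is this bookkeeping for $L_\alpha$ and $S_\alpha$. One must check that the $\one[\alpha^2]$ terms from the scalar indicators cancel the $-1$ in $\sigma_\alpha-1$, leaving $\sigma_\alpha\one[\alpha^2]$. One must also check that the $\one[\alpha^\ell]$ terms collapse to the single $-\one[\alpha^\ell]$, and that the $\one[\alpha]$ terms assemble into $\pm k\one[\alpha]$ via the $\tfrac{k}{q-1}$ pieces of $u$. I would first verify these cancellations for $L_1$, where the eigenvalue must equal the regularity degree $k\ell(q+1)-1$, and use that as a check on the constants.
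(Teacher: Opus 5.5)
Your plan is the paper's proof: apply Theorem \ref{Babai} class by class, observe that the corrections $-\frac{q-1}{2}u(x)$ on $c_1$ and $+\frac12u(x)$ on $c_2$ exactly complete the non-split sum to a sum over all of $\BK^*$ (they cancel in the $P_{\alpha,\beta}$ row), and finish with Remarks \ref{rem1}--\ref{rem3}; the $L_\alpha$ and $S_\alpha$ cancellations you defer do work out as you predict. One side remark is false and should be dropped: with $a=\log_\varepsilon x$ and $m$ odd, $(q+1)a\equiv(\ell+2)a \pmod{2\ell}$, so for $\ell=2$ the function $u$ vanishes on all of $\BF^*$, while for $\ell\ge 4$ you get $u(\varepsilon)=\frac{k}{mq(q-1)}$, the third value; only the implication $\chi(x)=1\Rightarrow u(x)=0$ holds, and your computation never needs more, because the absorption uses only that $u$ is defined on all of $\BK^*$.
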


\begin{remark}

By Remark \ref{rem3}, $0 \ls \sigma_\alpha \ls k$ and $\sigma_1 = k$. In the $L_\alpha$ and $P_{\alpha, \beta}$ rows, every term except the final negative indicator is non-negative, so the value is at least $-1$. The $W_\lambda$ row is non-negative. In the Steinberg row, if $\alpha = 1$ then $\sigma_1 - k = 0$, whereas if $\alpha \ne 1$ the negative term $-k\cdot\one\fk{\alpha}$ vanishes. That row is also at least $-1$. In particular, $$E_w(L_1) = k\ell(q + 1) - 1,\quad E_w(S_1) = -1.$$Thus $w$ has row sum $k\ell(q + 1) - 1$ and least eigenvalue $-1$. Once Proposition \ref{last} is proved, Theorem \ref{Hoffman} yields $\abs{S} \ls \abs{\GL_2(q)}/k\ell(q + 1)$, which matches Lemma \ref{Borel} and completes Theorem \ref{main}.
	
\end{remark}

\begin{proof}

By the definition of $u$ and Remarks \ref{rem1}--\ref{rem3}, we have$$\sum_{z \in \BK^*}\alpha(z^{q + 1})u(z) = \frac{k(q + 1)}{mq}\cdot\one\fk{\alpha} + \frac{1}{q}\cdot\one\fk{\alpha^\ell}$$and$$\sum_{z \in \BK^*}\lambda(z)u(z) = \frac{-1}{q}\cdot\pt{1 + \frac{k(q + 1)}{q - 1}}\cdot\one\fk{\lambda^{2\ell}}.$$	
Note that the size for $c_3(x, y)$ (resp. $c_4(z)$) is $q^2 + q$ (resp. $q^2 - q$). When summing over all ordered pairs $x \ne y$, use $\frac{q^2 + q}{2}$; when summing over all $z \in \BK^* \setminus \BF^*$, use $\frac{q^2 - q}{2}$, because these parametrizations count each class twice. In every row, Theorem \ref{Babai} also divides the character sum by its degree $\theta(1)$, namely $1$, $q$, $q + 1$ and $q - 1$ respectively. We retain the intermediate sums to make all class-size and degree normalizations explicit.

For the linear characters, Theorem \ref{Babai} gives 
\[\begin{aligned}
E_w(L_\alpha)
&= \sum_{x \in \BF^*}\alpha(x^2)\pt{\frac{q + 1}{2mq}\cdot\one\fk{\chi(x) \ne 1} - \frac{q - 1}{2}\cdot u(x)}\\
&\;\;\;\;+ (q^2 - 1)\cdot\sum_{x \in \BF^*}\alpha(x^2)\pt{\frac{1}{2mq}\cdot\one\fk{\chi(x) \ne 1} + \frac{1}{2}\cdot u(x)}\\
&\;\;\;\;+ \frac{q^2 + q}{2}\cdot\sum_{x, y \in \BF^*: x \ne y}\alpha(xy)\cdot\frac{1}{kmq}\cdot\abs{U_{x/y}}\\
&\;\;\;\;+ \frac{q^2 - q}{2}\cdot\sum_{z \in \BK^* \setminus \BF^*}\alpha(z^{q + 1})u(z)\\
&= \frac{q + 1}{2m}\cdot\sum_{x \in \BF^*: \chi(x) \ne 1}\alpha(x^2) + \frac{q + 1}{2km}\cdot\sum_{x, y \in \BF^*: x \ne y}\alpha(xy)\abs{U_{x/y}}\\
&\;\;\;\;+ \frac{q^2 - q}{2}\cdot\sum_{z \in \BK^*}\alpha(z^{q + 1})u(z)\\
&= \frac{q + 1}{2m}\cdot\pt{(q - 1)\cdot\one\fk{\alpha^2} - m\cdot\one\fk{\alpha^\ell}} + \frac{q + 1}{2km}\cdot k(q - 1)(\sigma_\alpha - 1)\cdot\one\fk{\alpha^2}\\
&\;\;\;\;+\frac{q(q - 1)}{2}\cdot\pt{\frac{k(q + 1)}{mq}\cdot\one\fk{\alpha} + \frac{1}{q}\cdot\one\fk{\alpha^\ell}}\\
&= \frac{\ell(q + 1)}{2}\cdot\pt{\sigma_\alpha\cdot\one\fk{\alpha^2} + k\cdot\one\fk{\alpha}} - \one\fk{\alpha^\ell}.
\end{aligned}\]
For the Steinberg characters,
\[\begin{aligned}
E_w(S_\alpha)
&= \frac{1}{q}\cdot\left\{\sum_{x \in \BF^*}q\alpha(x^2)\pt{\frac{q + 1}{2mq}\cdot\one\fk{\chi(x) \ne 1} - \frac{q - 1}{2}\cdot u(x)}\right.\\
&\;\;\;\;+ \frac{q^2 + q}{2}\cdot\sum_{x, y \in \BF^*: x \ne y}\alpha(xy)\cdot\frac{1}{kmq}\cdot\abs{U_{x/y}}\\
&\;\;\;\;\left.- \frac{q^2 - q}{2}\cdot\sum_{z \in \BK^* \setminus \BF^*}\alpha(z^{q + 1})u(z)\right\}\\
&= \frac{q + 1}{2mq}\cdot\sum_{x \in \BF^*: \chi(x) \ne 1}\alpha(x^2) + \frac{q + 1}{2kmq}\cdot\sum_{x, y \in \BF^*: x \ne y}\alpha(xy)\abs{U_{x/y}} - \frac{q - 1}{2}\cdot\sum_{z \in \BK^*}\alpha(z^{q + 1})u(z)\\
&= \frac{q + 1}{2mq}\cdot\left((q - 1)\cdot\one\fk{\alpha^2} - m\cdot\one\fk{\alpha^\ell}\right) + \frac{q + 1}{2kmq}\cdot k(q - 1)(\sigma_\alpha - 1)\cdot\one\fk{\alpha^2}\\
&\;\;\;\;- \frac{q - 1}{2}\cdot\pt{\frac{k(q + 1)}{mq}\cdot\one\fk{\alpha} + \frac{1}{q}\cdot\one\fk{\alpha^\ell}}\\
&= \frac{\ell(q + 1)}{2q}\cdot\pt{\sigma_\alpha\cdot\one\fk{\alpha^2} - k\cdot\one\fk{\alpha}} - \one\fk{\alpha^\ell}.
\end{aligned}\]
For the principal-series characters,
\[\begin{aligned}
E_w(P_{\alpha, \beta})
&= \frac{1}{q + 1}\cdot\left\{\sum_{x \in \BF^*}(q + 1)\alpha\beta(x)\pt{\frac{q + 1}{2mq}\cdot\one\fk{\chi(x) \ne 1} - \frac{q - 1}{2}\cdot u(x)}\right.\\
&\;\;\;\;+ (q^2 - 1)\cdot\sum_{x \in \BF^*}\alpha\beta(x)\pt{\frac{1}{2mq}\cdot\one\fk{\chi(x) \ne 1} + \frac{1}{2}\cdot u(x)}\\
&\;\;\;\;\left.+ \frac{q^2 + q}{2}\cdot\sum_{x, y \in \BF^*: x \ne y}(\alpha(x)\beta(y) + \alpha(y)\beta(x))\cdot\frac{1}{kmq}\cdot\abs{U_{x/y}}\right\}\\
&= \frac{1}{m}\cdot\sum_{x \in \BF^*: \chi(x) \ne 1}\alpha\beta(x) + \frac{1}{km}\cdot\sum_{x, y \in \BF^*: x \ne y}\alpha(x)\beta(y)\abs{U_{x/y}}\\
&= \frac{1}{m}\cdot\pt{(q - 1)\cdot\one\fk{\alpha\beta} - m\cdot\one\fk{(\alpha\beta)^\ell}} + \frac{1}{km}\cdot k(q - 1)(\sigma_\alpha - 1)\cdot\one\fk{\alpha\beta}\\
&= \ell\sigma_\alpha\cdot\one\fk{\alpha\beta} - \one\fk{(\alpha\beta)^\ell}.
\end{aligned}\]
Finally, for the cuspidal characters,
\[\begin{aligned}
E_w(W_\lambda)
&= \frac{1}{q - 1}\cdot\left\{\sum_{x \in \BF^*}(q - 1)\lambda(x)\pt{\frac{q + 1}{2mq}\cdot\one\fk{\chi(x) \ne 1} - \frac{q - 1}{2}\cdot u(x)}\right.\\
&\;\;\;\;- (q^2 - 1)\cdot\sum_{x \in \BF^*}\lambda(x)\pt{\frac{1}{2mq}\cdot\one\fk{\chi(x) \ne 1} + \frac{1}{2}\cdot u(x)}\\
&\;\;\;\;\left.- \frac{q^2 - q}{2}\cdot\sum_{z \in \BK^* \setminus \BF^*}(\lambda(z) + \lambda^q(z))u(z)\right\}\\
&= -q\cdot\sum_{z \in \BK^*}\lambda(z)u(z)\\
&= \pt{1 + \frac{k(q + 1)}{q - 1}}\cdot\one\fk{\lambda^{2\ell}}.
\end{aligned}\]
This proves the four formulas.
\end{proof}

\section{Further Directions}

The theorem determines extremal sizes in its state regimes, rather than a classification of all maximum families. Determining the value for $\chi(-1) = 1$, $t \gs 3$ extending the analysis to $q = p$ or $p \ls t$, and studying uniqueness, the least-eigenspace structure or stability of maximum families remain separate questions.

\vspace{0.5cm}
\newpage
\noindent\textbf{\Large Acknowledgement} \vspace{0.5cm}

The author acknowledges the use of ChatGPT 6.1 Sol during the exploratory stage of this project. All mathematical arguments and proofs presented in the final manuscript were developed and rigorously verified by the author. The author takes full responsibility for the content of the manuscript.

\bibliographystyle{abbrv}
\nocite{*}
\bibliography{EKR06_refs}
	
\end{document}